\tikzset{
	modal/.style={>=stealth’,shorten >=1pt,shorten <=1pt,auto,node distance=1.5cm,
		semithick},
	world/.style={circle, draw,minimum size=.1cm,fill=gray!15},
	point/.style={circle,draw,inner sep=0.3mm,fill=black},
	circ/.style={circle,draw,inner sep=0.1mm,fill=white},
	reflexive above/.style={->,loop,looseness=7,in=120,out=60},
	reflexive below/.style={->,loop,looseness=7,in=240,out=300},
	reflexive left/.style={->,loop,looseness=7,in=150,out=210},
	reflexive right/.style={->,loop,looseness=7,in=30,out=330}
}
\theoremstyle{definition}
\newtheorem{defn}{Definition}[section]
\newtheorem{prop}[defn]{Proposition}
\newtheorem{thm}[defn]{Theorem}
\newtheorem{example}[defn]{Example}
\newtheorem{lem}[defn]{Lemma}
\newtheorem{claim}[defn]{Claim}
\newtheorem{subclaim}[defn]{Subclaim}
\title[The List-distinguishing chromatic number]{The List-distinguishing chromatic number of graphs containing only small complete bigraphs}
\author{Amitayu Banerjee}
\address{E\"otv\"os Lor\'and University, Budapest, Hungary}
\email{banerjee.amitayu@gmail.com}
\date{}
\subjclass[2020]{05C15, 05C25.}
\keywords{List-distinguishing chromatic number, Cayley graphs, Menger's theorem, Distinguishing chromatic number}
\begin{document}
\begin{abstract}
Let $G$ be a simple finite graph with maximum degree $\Delta(G)$.
The distinguishing chromatic number $\chi_{D}(G)$ of $G$ is the minimum number of colors in a proper vertex coloring of $G$ that is preserved only by the identity automorphism. Collins and Trenk (2006) proved that $\chi_{D}(G)\leq 2 \Delta(G)$ for any connected graph $G$, and that the equality holds if $G$ is the complete bigraph $K_{\Delta(G),\Delta(G)}$ or $C_{6}$. 
Inspired by Catlin's combinatorial techniques from 1978, we establish improved bounds for several classes of connected graphs that have only small complete bigraphs as induced subgraphs.
Fix $4\leq r\leq \frac{\Delta(G)}{2}$.
We prove that if $G$ is an $(n-2r+1)$-connected graph of order $n>2r$, and 
$G$ does not contain complete bigraphs $K_{r,r+1}$ and $K_{r+1,r+1}$ as induced subgraphs, then $\chi_{D}(G)\leq \chi_{D_{L}}(G)\leq 2\Delta(G)- (3\lfloor\frac{(\Delta(G)+2)}{r+1}\rfloor-4)$ where $\chi_{D_{L}}(G)$ is the list-distinguishing chromatic number of $G$ and $\chi_{D}(H)<\chi_{D_{L}}(H)$ holds in general for a connected graph $H$. 
We apply Menger's theorem to demonstrate applications of the result for graphs related to Paley graphs, Cayley graphs on Dihedral groups, and circulant Cayley graphs.
\end{abstract}

\maketitle
\section{Introduction} 
Let $K_{n}$ be a complete graph of $n$ vertices, $K_{t_{1},...,t_{m}}$ be a complete $m$-partite graph with partitions of size $t_{1},...,t_{m}$, and $C_{n}$ be a cycle of $n$ vertices.
Collins and Trenk \cite{CT2006} introduced the distinguishing chromatic number of a graph and obtained a general upper bound for it.

\begin{thm}\label{Theorem 1.1}{(Collins and Trenk \cite[Theorem 4.5]{CT2006})}
{\em Let $G$ be a connected graph. Then $\chi_{D}(G)\leq 2\Delta(G)-1$ if $G\not\in\{K_{\Delta(G),\Delta(G)},C_{6}\}$ and $\chi_{D}(G) = 2\Delta(G)$} if $G\in\{K_{\Delta(G),\Delta(G)},C_{6}\}$.    
\end{thm}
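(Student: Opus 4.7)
The statement has two halves: equality in the two exceptional graphs, and the strict bound otherwise. For the lower bound on $K_{\Delta,\Delta}$ with parts $A,B$, any proper coloring must assign disjoint palettes to $A$ and $B$, and any transposition of two equally-colored vertices within a side is a color-preserving automorphism; hence each color class is a singleton, forcing at least $|A|+|B|=2\Delta$ colors, while assigning all-distinct colors trivially achieves $2\Delta$. For $C_{6}$, the upper bound $4$ is witnessed by the pattern $1,2,3,1,2,4$, and I would check by a short case analysis, enumerating proper $3$-colorings of $C_{6}$ up to the dihedral action (patterns $121212$, $123123$, $121213$, $121323$, $123213$), that each admits a nontrivial color-preserving automorphism; this yields $\chi_{D}(C_{6})\geq 4$.

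For the generic upper bound I first dispatch the Brooks-exceptional families: $\chi_{D}(K_{n})=n\leq 2(n-1)-1$ for $n\geq 3$ (with $K_{2}=K_{1,1}$ excluded), and $\chi_{D}(C_{2k+1})=3\leq 2\cdot 2-1$ for $k\geq 2$ (with $C_{3}=K_{3}$ already handled). For every other connected $G$, Brooks' theorem yields a proper coloring $c:V(G)\to\{1,\dots,\Delta\}$, and I reserve an extra palette $P=\{\Delta+1,\dots,2\Delta-1\}$ of $\Delta-1$ new colors, for a total of $2\Delta-1$. The basic local count is that at any vertex $v$ the neighborhood uses at most $\Delta$ colors, so at least $\Delta-1$ colors are free at $v$ in $\{1,\dots,2\Delta-1\}$; in particular any individual vertex can be recolored to a color in $P$ while preserving properness.

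The core step is then to destroy nontrivial color-preserving automorphisms by recoloring a chosen orbit. Pick a vertex $v_{0}$ and let $O$ be its $\mathrm{Aut}(G)$-orbit. If $|O|\leq \Delta-1$, a greedy/Hall-type argument produces an injective recoloring $O\to P$ that respects the per-vertex properness constraints; after such a recoloring, any remaining color-preserving automorphism must fix $O$ pointwise, and since $G$ is connected and no other vertex carries a color from $P$, the stabilizer acts trivially, giving $\chi_{D}(G)\leq 2\Delta-1$. The main obstacle is the complementary regime in which every orbit has size $\geq \Delta$: the injection into $P$ cannot be arranged, and one must argue structurally that the combination of large orbits, the degree bound, and connectedness forces $G\in\{K_{\Delta,\Delta},C_{6}\}$, so that the remaining case does not in fact arise. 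This structural classification — pinning down the graphs in which every $\mathrm{Aut}(G)$-orbit is too big to fit into the spare palette — is the most delicate step, and is the one I expect to occupy most of the technical effort, likely via an analysis of small diameter together with the action of $\mathrm{Aut}(G)$ on pairs of adjacent vertices.
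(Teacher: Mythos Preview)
The paper does not prove Theorem~1.1; it is quoted as a background result from Collins and Trenk \cite{CT2006} and is used only as a black box (indeed the paper's own work relies on the list analogue, Theorem~1.4, also quoted without proof). So there is no ``paper's own proof'' to compare against.

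That said, your sketch has two genuine gaps that would have to be repaired before it could stand on its own. First, after recoloring an orbit $O$ injectively into the spare palette $P$, it does not follow that a color-preserving automorphism fixing $O$ pointwise is the identity: such an automorphism can still permute vertices outside $O$ that share a color from $\{1,\dots,\Delta\}$, and connectivity alone does not prevent this. You would need an additional argument (or a more careful choice of which vertices to recolor) to kill the pointwise stabilizer of $O$. Second, the ``complementary regime'' is not a narrow residual case: every vertex-transitive graph has a single orbit of size $|V(G)|$, so your structural classification would have to show that \emph{every} connected vertex-transitive graph other than $K_{\Delta,\Delta}$ and $C_6$ satisfies $\chi_D(G)\le 2\Delta-1$ by some other route. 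That is essentially the full strength of the theorem restricted to the vertex-transitive case, not a small leftover. The original Collins--Trenk argument avoids both issues by building the coloring greedily along a rooted spanning tree (a BFS-type layering), controlling at each step which siblings can be swapped; that technique does not separate into ``Brooks coloring plus orbit repair'' at all.
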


Distinguishing chromatic number of several classes of graphs has been intensively
studied in graph theory.
Laflamme and Seyffarth \cite{LS2009} proved that if $G$ is a connected bipartite graph such that $\Delta(G)\geq 3$, then 
$\chi_{D}(G)\leq 2\Delta(G)-2$ whenever $G\not\cong K_{\Delta(G)-1,\Delta(G)}, K_{\Delta(G),\Delta(G)}$.
Fijav\v{z}, Negami, and Sano \cite{FNS2011} proved that $\chi_{D}(G) \leq  5$ for every 3-connected planar graph $G \not\in \{K_{2,2,2}, C_{6} + \overline{K_{2}}\}$.
For any fixed $k \in \mathbb{N}$, Balachandran, Padinhatteer, and Spiga \cite{BPS2019} found an infinite family of Cayley graphs $G_{n}=(V(G_{n}), E(G_{n}))$ with $\chi_{D}(G_{n}) > \chi(G_{n})>k$ such that $\vert Aut(G_{n})\vert < 2k\vert V(G_{n})\vert$ where $\chi(G)$ is the chromatic number of $G$. 
In Banerjee, Gopaulsingh, and Moln\'{a}r \cite[Theorem 2.8]{BGMa2025}, we proved that if $G$ is a connected graph of order at least $3$ and $M(G)$ is the middle graph of $G$, then $\chi_{D}(M(G))$ is $\Delta(G)+1$ if $G\not\in\{C_{4}, K_{4}, C_{6}, K_{3,3}\}$, and $\Delta(G)+2$ otherwise. 
Alikhani and Soltani \cite{AS2016} proved that if $G$ is a bipartite graph of girth at least six, then   
$\chi_{D}(G)\leq \Delta(G)+1$. 
Later, Cranston \cite{Cra2018} confirmed a conjecture of Collins and Trenk by showing that a connected graph $G \not\cong C_{6}$ of girth at least five satisfies $\chi_{D}(G)\leq \Delta(G)+1$.

\begin{defn}\label{Definition 1.2}
Given a family of graphs $H_{1},..., H_{k}$, a graph $G$ is {\em $(H_{1},...,H_{k})$-free} if none of the graphs $H_{1}$,...,$H_{k}$ is an induced subgraph of $G$.    
\end{defn}

Cranston \cite{Cra2018} proved that if $G$ is a connected $(C_{3}, C_{4})$-free graph, then $\chi_{D}(G) \leq \Delta(G) + 1$ unless $G \cong C_{6}$. Recently, Brause, Kalinowski, Pil\'{s}niak, and Schiermeyer \cite{BKPS2025} showed that the upper bound of $\chi_{D}(G)$ mentioned in Theorem \ref{Theorem 1.1} can be substantially reduced if one forbids certain small graphs as induced subgraphs of $G$. 
In 2013, Ferrara et al. \cite{FGHSW2013} extended the notion of distinguishing proper coloring to a list distinguishing proper coloring. 

\begin{defn}\label{Definition 1.3}
Given an assignment $L = {L(v)}_{v\in V(G)}$ of lists of available colors to the vertices of $G$, we say that $G$ is {\em properly L-distinguishable} if there is a proper coloring $f$ of $G$ such that
$f$ is preserved only by the identity automorphism and
$f(v) \in L(v)$ for all $v\in V(G)$. 
The {\em list-distinguishing chromatic number} of $G$, denoted by $\chi_{D_{L}}(G)$, is the minimum integer $k$ such that $G$ is properly $L$-distinguishable for any list
assignment $L$ with $\vert L(v)\vert = k$ for all $v\in V(G)$.
\end{defn}

It is clear that $\chi(G)\leq \chi_{D}(G) \leq \chi_{D_{L}}(G)$. In Banerjee, Moln\'{a}r, and Gopaulsingh \cite[Section 1.1]{BMG2025}, we constructed a connected graph $G'$ such that $\chi_{D_L}(G')\neq \chi_D(G')$ and proved the following.

\begin{thm}\label{Theorem 1.4}
{(Banerjee, Moln\'{a}r, and Gopaulsingh \cite[Theorem 3.1]{BMG2025})} 
{\em Let $G$ be a connected finite graph with maximum degree $\Delta(G)$. Then $\chi_{D_{L}}(G)\leq 2\Delta(G)-1$, unless $G$ 
is $K_{\Delta(G),\Delta(G)}$ or $C_6$. In these cases, $\chi_{D_{L}}(G) = 2\Delta(G)$. } 
\end{thm}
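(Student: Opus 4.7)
The plan is to mirror the proof of Theorem \ref{Theorem 1.1} by Collins and Trenk, but to carry out every step with respect to an arbitrary list assignment $L$ of the stated size rather than a fixed palette. Since the inequality $\chi_D(G)\le \chi_{D_L}(G)$ is automatic, the two exceptional graphs $G = K_{\Delta(G),\Delta(G)}$ and $G = C_6$ inherit the lower bound $\chi_{D_L}(G) \ge 2\Delta(G)$ from Theorem \ref{Theorem 1.1}. The matching upper bound in those cases is easy, because $|V(G)| = 2\Delta(G)$ in both, so a greedy rainbow colouring from lists of size $2\Delta(G)$ is proper and automatically distinguishing.

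For the main bound $\chi_{D_L}(G) \le 2\Delta(G)-1$ on a non-exceptional $G$, I would fix an arbitrary list assignment $L$ with $|L(v)| = 2\Delta(G) - 1$ for every $v$ and build a proper $L$-distinguishing colouring in two stages. First, produce some proper $L$-colouring $f_0$; this exists because the list chromatic number of a connected graph that is neither a complete graph nor an odd cycle is at most $\Delta(G)$ by the Erd\H{o}s--Rubin--Taylor list-Brooks theorem (the complete-graph and odd-cycle subcases are checked by hand). Second, recolour $f_0$ so as to break every non-identity automorphism while preserving properness. The crucial quantitative input is the slack
\[
|L(v) \setminus f_0(N(v))| \ge |L(v)| - \deg(v) \ge \Delta(G) - 1
\]
at every vertex $v$, which is exactly the flexibility that Collins and Trenk exploit when they run their argument on a fixed palette of size $2\Delta(G)-1$. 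I would then transcribe their structural case analysis -- distinguishing, for instance, whether $G$ is bipartite and whether $G$ contains an induced $K_{\Delta,\Delta}$ -- replacing each recolouring step of the form ``assign to $v$ a colour in $[2\Delta(G)-1]$ not used on $N(v)$'' by the analogous ``assign to $v$ a colour in $L(v) \setminus f_0(N(v))$''.

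The hard part will be to check that whenever Collins and Trenk's argument asks for a \emph{coordinated} colour choice at several vertices simultaneously (for example, forcing two vertices to share a colour in order to destroy a long-range automorphism), the lists still permit such coordination. I expect the resolution to come from a Hall-type bipartite matching between the finite set of non-identity automorphisms that remain to be broken and the admissible colour choices at a suitable set of witness vertices: the $\Delta(G)-1$ slack at each vertex should dominate the number of forbidden coincidences unless $G$ is one of the two extremal graphs, recovering exactly the stated exceptional list.
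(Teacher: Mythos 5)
This theorem is not proved in the present paper at all: it is imported by citation from \cite[Theorem 3.1]{BMG2025}, so there is no in-paper argument to measure your proposal against. Judged on its own terms, your write-up is an outline rather than a proof, and the gap sits exactly where the theorem's content lies. The parts you do settle are fine: the lower bound for $K_{\Delta(G),\Delta(G)}$ and $C_6$ follows from $\chi_D \le \chi_{D_L}$ together with Theorem \ref{Theorem 1.1}, and the matching upper bound via a rainbow colouring on $2\Delta(G)$ vertices is correct. But for the main inequality $\chi_{D_L}(G)\le 2\Delta(G)-1$ you explicitly defer the central step (``the hard part will be to check \dots I expect the resolution to come from a Hall-type bipartite matching''), and that deferral is not innocuous. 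Two concrete issues: (1) any step in the Collins--Trenk analysis that \emph{forces two vertices to receive the same colour} is generally impossible in the list setting, since distinct vertices may have disjoint lists; you name this situation yourself but offer no mechanism for handling it, and an unspecified Hall-type argument does not address it because the obstruction is not a counting one. (2) Your two-stage architecture (first a proper $L$-colouring via list-Brooks, then a recolouring pass) does not match the structure of the known proofs of the $2\Delta$-type bounds, which colour greedily in BFS order while simultaneously avoiding the colours of graph-neighbours and of tree-siblings; that one-pass greedy scheme is inherently list-compatible (each vertex avoids at most $2\Delta(G)-2$ previously used colours after the root's children are handled), and identifying precisely when $2\Delta(G)-1$ colours suffice is where the exceptional graphs emerge. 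As written, your proposal asserts that the adaptation ``should'' go through rather than demonstrating that it does, so it cannot be accepted as a proof of the statement.
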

 
Borodin and Kostochka \cite{BK1977}, Catlin \cite{Cat1978}, and Lawrence \cite{Law1978} independently improved the Brooks' upper bound for $\chi (G)$ for graphs omitting certain cliques.  
They showed that if $K_{r+1} \not\subseteq G$, where $3 \leq r \leq \Delta(G)$, then $\chi(G) \leq \frac{r}{r+1}(\Delta(G) + 2)$ by applying the following result of Lov\'{a}sz.

\begin{lem}\label{Lemma 1.5}{\em (Lov\'{a}sz; \cite{Lov1966})}
{\em If $\sum_{i=1}^{t} x_{i}\geq \Delta(G)+1-t$, then there is a partition of $V(G)$ into sets $V_{1},...,V_{t}$ such that $\Delta(G[V_{i}])\leq x_{i}$ for $1\leq i\leq t$, if $G[V_{i}]$ is the subgraph of $G$ induced by $V_{i}$.}    
\end{lem}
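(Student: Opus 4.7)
The plan is to prove the lemma by an extremal argument on ordered $t$-partitions of $V(G)$. First, among all partitions $(V_1,\ldots,V_t)$ of $V(G)$ into $t$ (possibly empty) parts, I would select one minimizing the weighted potential $\Phi := \sum_{i=1}^{t} \frac{e(G[V_i])}{x_i+1}$; such a minimizer exists since the number of ordered $t$-partitions of a finite vertex set is finite. The idea is then to show that if some block $V_i$ still contains a vertex of excessive induced degree, then moving that vertex to a suitable other block strictly decreases $\Phi$, contradicting minimality.

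Suppose for contradiction that some $v \in V_i$ satisfies $d_{V_i}(v) \geq x_i+1$, where $d_{V_i}(v)$ denotes the number of neighbors of $v$ inside $V_i$. Since $d_G(v)\leq \Delta(G)$, summing the degrees of $v$ across the remaining blocks gives $\sum_{j\neq i} d_{V_j}(v) \leq \Delta(G) - x_i - 1$, whereas the hypothesis $\sum_j x_j \geq \Delta(G)+1-t$ yields $\sum_{j\neq i}(x_j+1) \geq \Delta(G) - x_i$. A pigeonhole comparison of these two sums then forces an index $j\neq i$ for which $d_{V_j}(v) \leq x_j$.

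Moving $v$ from $V_i$ to this $V_j$ changes only the $i$-th and $j$-th summands of $\Phi$, and the net change equals $\frac{d_{V_j}(v)}{x_j+1} - \frac{d_{V_i}(v)}{x_i+1} \leq \frac{x_j}{x_j+1} - 1 < 0$. This contradicts the minimality of $\Phi$, so no such $v$ exists and $\Delta(G[V_i]) \leq x_i$ holds for every $i$, which is the desired conclusion.

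The main subtlety I anticipate is the choice of the potential: weighting $e(G[V_i])$ by $\frac{1}{x_i+1}$ is exactly what balances the pigeonhole bound against the strict-decrease step. With the unweighted quantity $\sum_i e(G[V_i])$, a legitimate move can \emph{increase} the raw edge count when the $x_i$'s are very uneven, so that extremal argument would fail; the reciprocal weighting is what makes the two halves of the argument match up uniformly in the $x_i$.
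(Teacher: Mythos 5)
Your proof is correct. The paper itself does not prove Lemma \ref{Lemma 1.5} (it is cited from Lov\'{a}sz), but the closely related partition argument it carries out in Section 3 (following Catlin) uses a different potential: it \emph{maximizes} the linear-corrected quantity $f(X_1,\dots,X_t)=\sum_i h_i|X_i|-\sum_i|E(G[X_i])|$ and derives $\deg_{G[X_i]}(x)\le h_i+\frac{t-2}{t}$ before invoking integrality, whereas you \emph{minimize} the weighted edge count $\sum_i e(G[V_i])/(x_i+1)$, which is essentially Lov\'{a}sz's original device. Your pigeonhole step is sound: $\sum_{j\neq i}d_{V_j}(v)\le\Delta(G)-x_i-1<\Delta(G)-x_i\le\sum_{j\neq i}(x_j+1)$ forces a block $V_j$ with $d_{V_j}(v)\le x_j$, and the move then decreases $\Phi$ by at least $1-\frac{x_j}{x_j+1}>0$, so the minimizer has no offending vertex. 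Your closing remark about why the unweighted edge count fails is also accurate and is precisely the reason the paper's version compensates with the linear term $\sum_i h_i|X_i|$ instead of reweighting; the two devices buy the same conclusion, but the Catlin-style potential additionally admits the secondary tie-breaking condition (minimizing the number of induced $K_{r,r}$'s) that the paper exploits in Subclaim \ref{Subclaim 3.3}, which would be less natural to graft onto the fractional potential. The only point worth making explicit is that the weights $1/(x_i+1)$ require $x_i\ge 0$ (implicit in the lemma, since the $x_i$ bound maximum degrees), and that the degenerate case $t=1$ is handled directly by the hypothesis $x_1\ge\Delta(G)$.
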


In \cite{BMG2025}, we applied Lemma \ref{Lemma 1.5} and followed the methods of Catlin \cite{Cat1978} to reduce the upper bound of $\chi_{D_{L}}(G)$ from Theorem \ref{Theorem 1.4} for certain graphs and proved the following result.

\begin{thm}\label{Theorem 1.6}
{(Banerjee, Moln\'{a}r, and Gopaulsingh \cite[Theorem 4.3]{BMG2025})} 
{\em Fix $6\leq t\leq \Delta(G)$. Let $G$ be a $(n-t)$-connected graph of order $n$ such that $G$ is $K_{t,t}$-free.
Then, 
\begin{center}
$\chi_{D_{L}}(G)\leq 2\Delta(G)-  (3\lfloor\frac{(\Delta(G)+1)}{t+1}\rfloor-2)$.    
\end{center}
}
\end{thm}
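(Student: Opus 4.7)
The plan is to adapt Catlin's 1978 partitioning technique to the list-distinguishing setting. Set $k := \lfloor(\Delta(G)+1)/(t+1)\rfloor$, so the target bound reads $2\Delta(G) - 3k + 2$; since $t \ge 6$ we have $k \le (\Delta(G)+1)/7$, leaving ample room for the decomposition.

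First I would apply Lemma~\ref{Lemma 1.5} with $s = 2$ parts and a strategic choice of parameters $x_1, x_2$ satisfying $x_1 + x_2 = \Delta(G) - 1$, namely $x_1 = \Delta(G) - k$ and $x_2 = k - 1$. This produces a partition $V(G) = V_1 \sqcup V_2$ with $\Delta(G[V_i]) \le x_i$, where each $G[V_i]$ inherits the $K_{t,t}$-free condition. Given any list assignment $L$ with $|L(v)| = 2\Delta(G) - 3k + 2$ for every $v$, I would extract from each $L(v)$ a working sublist of size $x_i + 1$ (when $v \in V_i$), chosen so that the resulting colorings on $V_1$ and $V_2$ harmonize across the cut. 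On each $G[V_i]$ the list version of Brooks' theorem (Erd\H{o}s-Rubin-Taylor) yields a proper list coloring from the working sublist, with the exceptional Brooks cases excluded by $K_{t,t}$-freeness and $t \ge 6$. Combining the two partial colorings gives a proper $L$-coloring $f$ of $G$, with a substantial surplus of unused colors per list still available.

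To upgrade $f$ to a distinguishing list coloring, I would invoke the $(n-t)$-connectivity through Menger's theorem, as in the proof of Theorem~\ref{Theorem 1.4}: any two vertices are joined by at least $n-t$ internally disjoint paths, severely constraining each non-identity $\sigma \in \mathrm{Aut}(G)$. Using the per-vertex surplus, recolor a carefully chosen marker set $S \subseteq V(G)$ so that for every non-identity automorphism some $v \in S$ satisfies $f(v) \neq f(\sigma(v))$. The main obstacle is this final step: one must simultaneously break every non-trivial automorphism while preserving properness, using only the limited surplus of colors per list. The $K_{t,t}$-free hypothesis excludes the bipartite twin configurations that saturate the Collins-Trenk bound, and the high connectivity guarantees enough independent candidate vertices for $S$; together they make the bound $2\Delta(G) - 3k + 2$ achievable.
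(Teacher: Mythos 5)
There is a genuine gap, and it sits exactly where you flag it yourself: the step that ``upgrades'' a proper list coloring to a distinguishing one by recoloring a marker set is the entire content of the theorem, and you assert rather than prove it. A surplus of unused colors per list plus high connectivity does not by itself let you break every non-identity automorphism while preserving properness --- the extremal example $K_{\Delta,\Delta}$ shows that killing automorphisms can cost up to $\Delta$ extra colors even in a highly connected, highly symmetric graph, so some structural input is needed at precisely this point, and your sketch supplies none. (Two smaller misreadings: Menger's theorem is not used in the proof of Theorem~\ref{Theorem 1.4}; it appears in this paper only to verify connectivity in the applications. And the exceptional cases of the Erd\H{o}s--Rubin--Taylor list version of Brooks' theorem are cliques and odd cycles, which $K_{t,t}$-freeness does not exclude.)

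The intended argument avoids this obstacle by never separating properness from distinguishing. Set $k=\lfloor(\Delta(G)+1)/(t+1)\rfloor$ and apply Lemma~\ref{Lemma 1.5} with $k$ parts (not $2$), choosing degree caps $h_1,\dots,h_k$ with each $h_i\ge t$ and $\sum_i h_i=\Delta(G)+1-k$, so that $\Delta(G[V_i])\le h_i$ for all $i$. The $(n-t)$-connectivity guarantees each $G[V_i]$ is connected (or small enough to be rainbow-colored), and $K_{t,t}$-freeness rules out $G[V_i]\cong K_{h_i,h_i}$; hence Theorem~\ref{Theorem 1.4} gives a proper \emph{list-distinguishing} coloring of each part from $2h_i-1$ colors. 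Running the parts on pairwise disjoint blocks of each list (as in Claim~\ref{claim 3.5}) forces any color-preserving automorphism to stabilize every $V_i$ setwise and hence restrict to a color-preserving automorphism of each $G[V_i]$, which is trivial. Summing, $\sum_i(2h_i-1)=2(\Delta(G)+1-k)-k=2\Delta(G)-(3k-2)$, which is where the $-3k$ term comes from; your two-part split cannot produce it, since two applications of any Brooks-type bound yield at best a constant saving independent of $k$.
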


\subsection{Main Result} In \cite{Cat1978a}, Catlin used an intriguing combinatorial trick instead of applying Lemma \ref{Lemma 1.5} to prove that if $G$ contains no $K_{r+2}\backslash \{e\}$ as a subgraph,\footnote{We follow the notations of Catlin \cite{Cat1978a}, where $K_{r+2}\backslash \{e\}$ denotes the clique on $r+2$ vertices, minus an edge.} where $3\leq r$, then $\chi(G)\leq \frac{r}{r+1}(\Delta(G)+3)$.
In this manuscript, inspired by the methods of Catlin \cite{Cat1978a}, we reduce the upper bound of $\chi_{D_{L}}(G)$ from Theorem \ref{Theorem 1.4} for a class of graphs and prove the following result.

\begin{thm}\label{Theorem 1.7}{\em Fix $4\leq r\leq \frac{\Delta(G)}{2}$. Let $G$ be a $(n-2r+1)$-connected graph of order $n>2r$ such that
$G$ is a ($K_{r,r+1}, K_{r+1,r+1}$)-free graph.
Then, 
\begin{center}
$\chi_{D_{L}}(G)\leq 2\Delta(G)-  (3\lfloor\frac{(\Delta(G)+2)}{r+1}\rfloor-4)$.
\end{center}
}
\end{thm}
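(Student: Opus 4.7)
The plan is to parallel the proof of Theorem \ref{Theorem 1.6} from \cite{BMG2025} but to replace its use of Lov\'asz's Lemma \ref{Lemma 1.5} with the direct combinatorial trick Catlin introduced in \cite{Cat1978a} for graphs avoiding $K_{r+2}\setminus\{e\}$. Set $k:=\lfloor\tfrac{\Delta(G)+2}{r+1}\rfloor$, so that the target bound is $2\Delta(G)-(3k-4)$. First I would produce a proper list-coloring of $G$ using at most this many colors by (i) choosing a suitable base partition of $V(G)$ induced by a greedy coloring, and (ii) performing roughly $k-1$ local ``Catlin swaps,'' each of which shaves three colors off the naive palette. The second step is to upgrade this proper list-coloring to a list-distinguishing one by exploiting the $(n-2r+1)$-connectivity.

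The heart of (i)--(ii) is to show that in a ($K_{r,r+1},K_{r+1,r+1}$)-free graph one can iteratively locate pairwise vertex-disjoint \emph{Catlin structures}, that is, induced subgraphs on which three nearly-full color classes can be collapsed into fewer. In \cite{Cat1978a}, such structures are produced by examining the neighborhood of a vertex $v$ with $\deg(v)$ close to $\Delta(G)$; the only obstruction is the appearance of a large complete bipartite graph inside $N_G(v)$. Under the weaker hypothesis that only $K_{r,r+1}$ and $K_{r+1,r+1}$ are forbidden (as opposed to Catlin's $K_{r+2}\setminus\{e\}$), the required swap still exists, but the bookkeeping is subtler. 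I would prove by induction on the number of swaps already performed that, whenever the current residual list at each unswapped vertex is still large enough, a further Catlin structure disjoint from the previous ones must be present. The hypotheses $r\geq 4$ and $r\leq \Delta(G)/2$ are what keep the arithmetic positive throughout this induction.

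For the upgrade to a list-\emph{distinguishing} coloring, I would follow the distinguishing argument underlying Theorem \ref{Theorem 1.4}. Given a non-identity $\sigma\in\mathrm{Aut}(G)$ and $v$ with $\sigma(v)\neq v$, the $(n-2r+1)$-connectivity (via Menger's theorem) supplies $n-2r+1$ internally vertex-disjoint $v$--$\sigma(v)$ paths, and so some vertex along these paths has a spare color in its list that can be reassigned without violating properness. Iterating over a complete set of orbit representatives of $\mathrm{Aut}(G)\setminus\{\mathrm{id}\}$ kills every non-identity automorphism. Since the post-swap list size $2\Delta(G)-(3k-4)$ remains comfortably larger than $2r-1$ whenever $r\leq \Delta(G)/2$, there is enough slack to carry out all the required reassignments.

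The main obstacle will be the induction in step (ii): establishing that Catlin's swap trick can be iterated while preserving simultaneously (a) the pairwise vertex-disjointness of successive swap sites, (b) the precise three-colors-per-swap saving reflected in the bound, and (c) enough leftover vertices of near-maximum degree to supply further Catlin structures. The forbidden bigraphs $K_{r,r+1}$ and $K_{r+1,r+1}$ are combinatorially weaker than Catlin's $K_{r+2}\setminus\{e\}$, so one must handle extra cases in which a candidate swap site shares a large bipartite portion with a previously used one; a case split on whether the intersection has at most $r$ or at least $r+1$ vertices, combined with the high connectivity, should close this gap and recover exactly the constant $3k-4$ in the theorem.
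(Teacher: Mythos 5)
Your proposal has the right ancestry (Catlin's trick from \cite{Cat1978a}) but the mechanism you describe diverges from what actually works, and the divergence hides a genuine gap. The paper does \emph{not} first build a proper list-coloring by iterated ``swaps'' and then upgrade it to a distinguishing one. Instead it fixes $t=\lfloor\frac{\Delta(G)+2}{r+1}\rfloor$, sets $h_i=r$ for $i<t$ and $h_t=\Delta(G)+2-t(r+1)+r$, and chooses a partition $(X_1,\dots,X_t)$ of $V(G)$ that maximizes $\sum_i h_i|X_i|-\sum_i|E(G[X_i])|$ and, subject to that, minimizes the number of induced $K_{r,r}$'s in the parts. This forces $\Delta(G[X_i])\le h_i$; the connectivity hypothesis is used only to guarantee that each part with at least $2r$ vertices induces a \emph{connected} subgraph, so that Theorem \ref{Theorem 1.4} applies to it as a black box and yields $\chi_{D_L}(G[X_i])\le 2h_i-1$ (after ruling out the exceptional cases $K_{\Delta,\Delta}$ and $C_6$). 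Summing via a list-splitting argument gives $\sum_i(2h_i-1)=2(\Delta(G)-t+2)-t=2\Delta(G)-(3t-4)$. So the $3t-4$ saving is an artifact of summing $2h_i-1$ over $t$ parts with $\sum h_i=\Delta(G)-t+2$, not of ``three colors shaved per swap''; your bookkeeping for (ii) has no clear route to the stated constant.

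The more serious problem is your step for upgrading properness to distinguishability. Recoloring one vertex along Menger paths per non-identity automorphism is not a valid argument: a single recoloring can destroy properness at that vertex, can reintroduce an automorphism you previously killed, and $\mathrm{Aut}(G)$ may be far too large to iterate over (the orbit-representative framing does not fix this, since killing a representative does not kill its whole conjugacy class of colour-preserving maps). In the paper the distinguishing property costs nothing extra: because the parts receive pairwise disjoint colour palettes, any colour-preserving automorphism stabilizes each $X_i$ setwise and restricts to a colour-preserving automorphism of $G[X_i]$, which is trivial by the choice of an $L^i$-distinguishing colouring on each part. Finally, the forbidden bigraphs enter precisely where you are vaguest: to apply Theorem \ref{Theorem 1.4} with the bound $2h_i-1$ rather than $2h_i$ one must show $G[X_i]\ne K_{h_i,h_i}$, and when $h_i=r$ this is done by the shifting argument (move a vertex of an induced $K_{r,r}$ to another part without decreasing $f$; by minimality of the $K_{r,r}$-count a new $K_{r,r}$ must form, and iterating until two such copies overlap produces an induced $K_{r+1,r}$ in $G$, contradicting $K_{r,r+1}$-freeness). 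Your case split on intersection sizes does not substitute for this argument.
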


\subsection{Applications} 
Fix integers $r>6$, $1\leq k\leq 5$, and $n$ with $3r+3\le n\le r^2$. Let $a=n-2r+1$ and $b=2r-1$.
\begin{enumerate}
    \item Let $P(a)$ be the Paley graph where $a$ is a power of a prime with $a\equiv 1\pmod4$ satisfying $2r+5 \leq a \leq \min\{4r-3,\ (r-1)^2\}$.

    \item Let $D_{2m}=\langle \rho,\sigma : \rho^m=\sigma^2=1,
    \sigma\rho\sigma=\rho^{-1}\rangle$ 
be the dihedral group of order $a=2m\geq r+8$, and 
$S = (R\setminus\{e\}) \cup F$
be the generating set where $R\setminus\{e\}=\{\rho^j:1\le j\le m-1\}$ is the set of all non-trivial rotations, and 
$F\subseteq \{\rho^j\sigma:0\le j\le m-1\}$ is a collection of $m-k$ reflections.
Let $\mathrm{Cay}(D_{2m},S)$ be the Cayley graph based on $D_{2m}$ and $S$. 
     
     \item Let $C^{t}_{a}$ be the $t^{th}$-power of $C_{a}$ where $t=\max\Big\{\Big\lceil\frac{a}{r-1}\Big\rceil-1,\ \Big\lceil\frac{a-b}{2}\Big\rceil,\ \Big\lceil\frac{r+2}{2}\Big\rceil\Big\}$ and $2t\leq b$.
\end{enumerate}

Let $G_{1}\lor G_{2}$ be the join of two graphs $G_{1}$ and $G_{2}$ and let $\alpha(G)$ be the size of the maximum independent set of $G$. Define $\mathcal{G}=\bigcup_{1\leq i\leq 4}\mathcal{G}_{i}$ where

\begin{itemize}
   \item $\mathcal{G}_{1}=\Bigl\{\,K_{n-2r+1} \vee H: H \text{ is any graph on } 2r-1 \text{ vertices with }\alpha(H)\leq r-1\Bigl\}\,$,

    \item $\mathcal{G}_{2}=\Bigl\{\,P(a) \vee H: H \text{ is any graph on } 2r-1 \text{ vertices with }\alpha(H)\leq r-1\Bigl\}\,$,

    \item $\mathcal{G}_{3}=\Bigl\{\,\text{Cay}(D_{2m},S) \vee H: H \text{ is any graph on } 2r-1 \text{ vertices with }\alpha(H)\leq r-1\Bigl\}\,$, and 

    \item $\mathcal{G}_{4}=\Bigl\{\,C^{t}_{a} \vee H: H \text{ is any graph on } 2r-1 \text{ vertices with }\alpha(H)\leq r-1\Bigl\}\,$.
\end{itemize}

In Theorem \ref{Theorem 4.1} and Propositions \ref{Proposition 4.3},  \ref{Proposition 4.4}, and \ref{Proposition 4.5}, we show that for every $r>6$ and every integer $n$ with $3r+3\le n\le r^2$, any $G\in\mathcal{G}$
satisfies the following properties:

\begin{enumerate}[$(i)$]
    \item $G$ is $(n-2r+1)$-connected,
    \item $G$ has no induced subgraph isomorphic to $K_{r,r+1}$ or $K_{r+1,r+1}$, and
    \item $\Delta(G)\geq 3r+1$.
\end{enumerate}

Applying Theorem \ref{Theorem 1.7}, we can see that $\chi_{D_{L}}(G)\leq 2\Delta(G)-(9-4)=2\Delta(G)-5$ is a better bound for $\chi_{D_{L}}(G)$ than $\chi_{D_{L}}(G)\leq 2\Delta(G)-1$.

\section{Definitions and useful facts}

\begin{defn}\label{Definition 2.1}
Let $G$ be a graph with the vertex set $V(G)$ and the edge set $E(G)$.
\begin{enumerate}
\item $G$ is {\em $k$-connected} if it has at least $k+1$ vertices and removal of any $k-1$ or fewer vertices leaves a connected graph. 
The connectivity of $G$ is the largest $k$ for which $G$ is $k$-connected.
Let $\kappa(G)$ be the connectivity of the graph $G$.

\item The {\em degree} of a vertex $v \in V(G)$ in $G$, denoted by $\deg_{G}(v)$, is the number of edges that emerge from $v$.
We say $G$ is a {\em $k$-regular graph} if all vertices of $G$ have degree $k$. The degree of a regular graph $G$, denoted by $d(G)$, is the same as the maximum degree $\Delta(G)$ and the minimum degree $\delta(G)$.

\item $G$ is an {\em edge-transitive graph} if given any two edges $e_{1}$ and $e_{2}$ of $G$, there is an automorphism of $G$ that maps $e_{1}$ to $e_{2}$. 

\item An {\em independent set} of $G$ is a set of vertices of $G$, no two of which are adjacent vertices. 
\end{enumerate}
\end{defn}

Let $x,y \in V(G)$ be two distinct non-adjacent vertices. Menger's theorem states that the minimum number of vertices, distinct from $x$ and $y$,  whose removal disconnects $x$ and $y$ 
is equal to the maximum number of pairwise internally vertex-disjoint $x$--$y$ paths in $G$.
Consequently, $G$ is $k$-connected if every pair of vertices has at least $k$ internally vertex disjoint paths in between.

\begin{lem}{(Hoffman; \cite[Theorem 1]{Hae2021})}\label{Lemma 2.2}
{\em The independence number $\alpha(G)$ for a $k$-regular graph $G$ on $n$ vertices with the least eigenvalue $\lambda_{min}$ is at most
$n\cdot\frac{-\lambda_{\min}}{k-\lambda_{\min}}$.}   
\end{lem}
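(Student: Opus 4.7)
The plan is to run a Rayleigh--quotient argument on the adjacency matrix $A$ of $G$. Because $G$ is $k$-regular the all-ones vector $\mathbf{1}$ satisfies $A\mathbf{1}=k\mathbf{1}$, and by Perron--Frobenius applied to the nonnegative symmetric matrix $A$, $k$ is the largest eigenvalue. Fix an independent set $S\subseteq V(G)$ with $|S|=\alpha(G)=\alpha$, and let $\chi_{S}\in\{0,1\}^{n}$ denote its indicator vector; the aim is to deduce an upper bound on $\alpha$ from the fact that $\chi_{S}^{T}A\chi_{S}=0$, since $S$ spans no edges.

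First I would decompose $\chi_{S}$ along the top eigendirection: writing $\chi_{S}=c\mathbf{1}+y$ with $y\perp \mathbf{1}$, the relation $\mathbf{1}^{T}\chi_{S}=\alpha$ forces $c=\alpha/n$, and Pythagoras gives $\|y\|^{2}=\|\chi_{S}\|^{2}-c^{2}n=\alpha-\alpha^{2}/n$. Then I would expand the quadratic form: using $A\mathbf{1}=k\mathbf{1}$ together with $\mathbf{1}^{T}y=0$, the identity $\chi_{S}^{T}A\chi_{S}=0$ collapses to
\[
0=c^{2}kn+y^{T}Ay.
\]
Invoking the Rayleigh bound $y^{T}Ay\geq \lambda_{\min}\|y\|^{2}$, which holds for every real $y$, and substituting the values of $c$ and $\|y\|^{2}$, one arrives at $\tfrac{\alpha^{2}k}{n}\leq -\lambda_{\min}\bigl(\alpha-\tfrac{\alpha^{2}}{n}\bigr)$. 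Dividing by $\alpha>0$ and isolating $\alpha$ yields the claimed inequality $\alpha\leq n\cdot \frac{-\lambda_{\min}}{k-\lambda_{\min}}$.

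The only real obstacle is a sign check that justifies the final rearrangement: one must know that $\lambda_{\min}<0$ and $k-\lambda_{\min}>0$ so that dividing preserves the direction of the inequality. Both facts follow as soon as $G$ has at least one edge (the opposite case $k=0$ makes the statement trivial): since $A$ is a nonzero symmetric $0/1$ matrix with zero trace, it must admit a strictly negative eigenvalue, and then $k\geq 1>\lambda_{\min}$ gives $k-\lambda_{\min}>0$. Note that no connectivity hypothesis on $G$ is needed, since the Rayleigh bound $y^{T}Ay\geq \lambda_{\min}\|y\|^{2}$ is valid for every real $y$ and does not require $y$ to lie outside the $k$-eigenspace; this makes the argument uniform whether or not $k$ has multiplicity one.
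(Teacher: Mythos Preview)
Your argument is correct and is in fact the standard Rayleigh--quotient proof of Hoffman's ratio bound. Note, however, that the paper does not supply its own proof of Lemma~\ref{Lemma 2.2}: it is quoted as a known result from the literature (attributed to Hoffman, with a reference to Haemers~\cite{Hae2021}) and used later only as a black box in Proposition~\ref{Proposition 4.3}. So there is nothing to compare against; you have simply filled in a proof that the paper omits by citation.
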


\begin{defn}[Undirected Cayley graph]\label{Definition 2.3}
Let $\Gamma$ be a finite group and let $S \subseteq \Gamma \setminus \{e\}$ 
be an inverse-closed subset of $\Gamma \setminus \{e\}$ i.e., $S = S^{-1}$ , where 
$S^{-1}:= \{s^{-1}: s \in S\}$.
The \emph{undirected Cayley graph} Cay$(\Gamma, S)$ is the graph with a set of vertices $\Gamma$, and the vertices $u$ and $v$ are adjacent in Cay$(\Gamma, S)$ if and only if $uv^{-1} \in S$. 
\end{defn}

It is known that Cay$(\Gamma, S)$ is a $|S|$-regular graph, and Cay$(\Gamma, S)$ is
connected if and only if $S$ is a generating set of $G$.

\begin{defn}{($k$-th power of a cycle)}\label{Definition 2.4}
Let $C_{n}$ be the cycle on $n$ vertices.  
For a positive integer $k$, the \emph{$k$-th power of the cycle} $C_{n}^{k}$ is the graph with vertex set $\mathbb{Z}_{n}$ in which two distinct vertices $x,y\in \mathbb{Z}_{n}$ are adjacent whenever $\min\{\,|x-y|,\; n-|x-y|\,\}\le k$.
Equivalently, $C_{n}^{k}$ is the undirected Cayley graph
$\mathrm{Cay}\big(\mathbb{Z}_{n},\;\{\pm 1,\pm 2,\dots,\pm k\}\big)$.
\end{defn}

\begin{lem}{(Ichishima--Muntaner-Batle--Takahashi;\cite[Lemma 5]{IAT2025})}\label{Lemma 2.5}
{\em For any two integers $n$ and $k$ with $n \geq k + 2$,
$\alpha(C^{k}_{n})=\lfloor \frac{n}{k+1} \rfloor$ where $k\in [2,\lfloor \frac{n}{2}\rfloor]$.
}    
\end{lem}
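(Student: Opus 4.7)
The plan is to prove the formula by matching lower and upper bounds, using the cyclic gap structure of $\mathbb{Z}_{n}$. Identify $V(C^{k}_{n})$ with $\mathbb{Z}_{n}$; by Definition~\ref{Definition 2.4}, a subset $I\subseteq\mathbb{Z}_{n}$ is independent in $C^{k}_{n}$ if and only if every two distinct $u,v\in I$ satisfy the cyclic distance
\[
d(u,v):=\min\bigl(|u-v|,\,n-|u-v|\bigr)\geq k+1.
\]

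For the lower bound, set $m=\lfloor n/(k+1)\rfloor$ and consider the explicit candidate
\[
I_{0}=\{\,i(k+1):0\leq i\leq m-1\,\}\subseteq\mathbb{Z}_{n}.
\]
Consecutive elements of $I_{0}$ in cyclic order have forward gap exactly $k+1$, except for the wrap-around gap $n-(m-1)(k+1)$, which is at least $k+1$ since $m(k+1)\leq n$ by definition of $m$. Consequently every pair in $I_{0}$ is at cyclic distance at least $k+1$, so $I_{0}$ is independent and $\alpha(C^{k}_{n})\geq m$.

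For the upper bound, let $I=\{v_{0}<v_{1}<\dots<v_{m-1}\}\subseteq\mathbb{Z}_{n}$ be any independent set with $m\geq 2$. Define cyclic forward gaps $g_{i}=v_{i+1}-v_{i}$ for $0\leq i<m-1$ and $g_{m-1}=n-v_{m-1}+v_{0}$, so that $\sum_{i=0}^{m-1}g_{i}=n$. The cyclic distance between the consecutive pair $\{v_{i},v_{i+1}\}$ (indices mod $m$) is $\min(g_{i},n-g_{i})$, which must be at least $k+1$. I claim that the hypothesis $k\leq\lfloor n/2\rfloor$ forces $g_{i}\geq k+1$ for every $i$: if $g_{i}\leq n/2$, then $\min(g_{i},n-g_{i})=g_{i}\geq k+1$ is immediate; if $g_{i}>n/2$, then $g_{i}>n/2\geq k$, and integrality of $g_{i}$ gives $g_{i}\geq k+1$. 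Summing over $i$ yields $m(k+1)\leq n$, i.e.\ $m\leq\lfloor n/(k+1)\rfloor$. The trivial case $m=1$ is compatible with the bound because $n\geq k+2$ ensures $\lfloor n/(k+1)\rfloor\geq 1$.

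The only delicate step is the ``wrap-around'' case $g_{i}>n/2$, which is precisely where the assumption $k\leq\lfloor n/2\rfloor$ is used (together with integrality of the gap) to conclude $g_{i}\geq k+1$; this is short, so I do not expect a real obstacle. All remaining steps are elementary accounting for gaps summing to $n$.
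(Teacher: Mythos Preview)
The paper does not supply its own proof of Lemma~\ref{Lemma 2.5}; the result is quoted from \cite{IAT2025} and used as a black box in Proposition~\ref{Proposition 4.5}. Your argument is correct and is the standard one: the arithmetic progression $\{0,k+1,\dots,(m-1)(k+1)\}$ furnishes the lower bound, and summing the cyclic gaps $g_i\ge k+1$ to $n$ gives the upper bound. The only place worth one extra word is the ``consequently'' in the lower bound: you pass from \emph{consecutive} gaps $\ge k+1$ to \emph{all} cyclic distances $\ge k+1$; this holds because for $m\ge2$ each of the two arcs between any pair of points of $I_0$ contains at least one full gap, but stating that explicitly removes any doubt.
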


\begin{defn}{(Paley graph)}\label{Definition 2.6}
Let $q$ be a prime power with $q \equiv 1 \pmod{4}$. The \emph{Paley graph} $P(q)$ is the undirected Cayley graph of the additive group $(\mathbb{F}_{q},+)$ with generating set equal to the set of non-zero quadratic residues in $\mathbb{F}_{q}$. The vertex set of $P(q)$ is $\mathbb{F}_{q}$ and two distinct vertices $x,y \in \mathbb{F}_{q}$ are adjacent if and only if $x-y$ is a non-zero quadratic residue in $\mathbb{F}_{q}^{\times}$.
\end{defn}

\begin{lem}{(Watkins; \cite[Corollary 1A]{Wat1970})}\label{Lemma 2.7}
{\em If $G$ is a connected edge-transitive graph, then $\kappa(G)=\delta(G)$}. 
\end{lem}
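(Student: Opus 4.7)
The plan is to prove the two inequalities $\kappa(G)\le\delta(G)$ and $\kappa(G)\ge\delta(G)$ separately. The upper bound $\kappa(G)\le\delta(G)$ holds for every graph, with no transitivity needed: if $v\in V(G)$ satisfies $\deg_{G}(v)=\delta(G)$, then removing $N(v)$ either isolates $v$ from the rest of $G$ or leaves the single vertex $v$, so $\kappa(G)\le|N(v)|=\delta(G)$.

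For the reverse inequality I would argue by contradiction. Assume $\kappa(G)=k<\delta(G)$ and let $S\subseteq V(G)$ with $|S|=k$ be a minimum vertex cut. Partition $V(G)\setminus S$ into two nonempty sets $A,B$ with no edges between them. Minimality of $S$ forces every $s\in S$ to have at least one neighbor in $A$ and at least one in $B$, since otherwise $S\setminus\{s\}$ would still separate $A$ from $B$. Because $|S|=k<\delta(G)$, every $v\in A$ has at most $k$ neighbors in $S$ and hence at least $\delta(G)-k\ge 1$ neighbors inside $A$; in particular $G[A]$, and symmetrically $G[B]$, contains an edge.

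Now I would use edge-transitivity. Fix an edge $e_{1}=aa'$ with both endpoints in $A$ and an edge $e_{2}=sb$ with $s\in S$ and $b\in B$, and choose $\phi\in\mathrm{Aut}(G)$ with $\phi(e_{1})=e_{2}$. Then $\phi(S)$ is another vertex cut of size $k$, yet the images $\phi(a),\phi(a')$ of the endpoints of $e_{1}$ land one in $S$ and the other in $B$. By tracking how $\phi$ distributes the vertices of $A$, $B$, and $S$ among $\phi(A)$, $\phi(B)$, and $\phi(S)$, and double-counting the edges across the original cut against those across the $\phi$-shifted cut, I would extract either an edge between $A$ and $B$ (impossible by construction) or a vertex cut of size strictly smaller than $k$, each contradicting the minimality of $S$.

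The main obstacle is the final cut-shrinking step: edge-transitivity gives exactly one edge orbit, so every edge of $G$ looks alike, but the vertex-level consequences must be teased out carefully. A clean way to finish uses the classical dichotomy for connected edge-transitive graphs, according to which $G$ is either vertex-transitive or bipartite with its two parts equal to the two vertex orbits of $\mathrm{Aut}(G)$. In the vertex-transitive case all degrees equal $\delta(G)$, so the counts $e(A,S)$, $e(B,S)$, $2|E(G[A])|+e(A,S)=|A|\delta(G)$ become rigid and comparing them with the corresponding counts for $\phi(S)$ pins down a contradiction. In the bipartite case $S$ must respect the bipartition to such an extent that the shifted cut $\phi(S)$ together with the edge $e_{2}$ produces a proper subset of $S$ that still separates, again contradicting $|S|=\kappa(G)$.
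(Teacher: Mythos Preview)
The paper does not supply its own proof of Lemma~2.7; it is simply quoted from Watkins with a citation, so there is nothing in the paper to compare against and the question is whether your sketch stands on its own.

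It does not: there is a genuine gap at the decisive step. The upper bound and the setup of the contradiction are fine, but the sentence ``double-counting the edges across the original cut against those across the $\phi$-shifted cut, I would extract either an edge between $A$ and $B$ \dots\ or a vertex cut of size strictly smaller than $k$'' is never carried out, and the naive counts you mention, such as $2|E(G[A])|+e(A,S)=\sum_{a\in A}\deg_G(a)$, are preserved by any automorphism and so cannot by themselves yield a contradiction. You then concede this is ``the main obstacle'' and fall back on the vertex-transitive/bipartite dichotomy, but both branches are again assertions (``pins down a contradiction'', ``produces a proper subset of $S$ that still separates'') with no mechanism given. In particular, vertex-transitivity alone does \emph{not} force $\kappa=\delta$, so in that branch edge-transitivity must re-enter in an essential way, and your outline does not say where.

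What is missing is the fragment/atom machinery that Watkins (and Mader) actually use. Call $A\subseteq V(G)$ a \emph{fragment} if $N(A)$ is a minimum cut and $V(G)\setminus(A\cup N(A))\neq\emptyset$, and an \emph{atom} if $|A|$ is minimum among fragments. The substantive lemma is that two distinct atoms are disjoint; this is precisely the ``cut-shrinking'' step you gesture at, and it requires its own submodularity-type argument. Granting it, the finish is clean: if $\kappa<\delta$ then any atom $A$ satisfies $|A|\ge 2$, so $A$ contains an edge $e$, and there is also an edge $f$ from $A$ to $N(A)$. Edge-transitivity gives $\phi\in\mathrm{Aut}(G)$ with $\phi(e)=f$; then $\phi(A)$ is an atom containing both endpoints of $f$, hence meets both $A$ and $N(A)$, so $\phi(A)\neq A$ yet $\phi(A)\cap A\neq\emptyset$, contradicting atom disjointness. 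Your proposal has the right instinct in choosing that particular $\phi$, but without the disjointness lemma the argument is incomplete.
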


The Paley graph $P(q)$ is a connected, edge transitive graph. By Lemma \ref{Lemma 2.7}, $\kappa(P(q))=\delta(P(q))$.

\section{The proof of Theorem 1.7}
Suppose $G$ is a $(n-2r+1)$-connected graph of order $n$ such that
$G$ does not contain $K_{r,r+1}$ and $K_{r+1,r+1}$ as induced subgraphs.

\subsection{Defining $t$ and $h_{i}$ for each $1\leq i\leq t$} We define 
\begin{itemize}
    \item $h=\Delta(G)$,
    \item $t=\lfloor \frac{h+2}{r+1}\rfloor$, 
    \item $h_{i}=r$ for all $1\leq i\leq t-1$, and 
    \item $h_{t}$ as an integer that satisfies $r\leq h_{t}=h+2-t(r+1)+r\leq 2r$.
\end{itemize}
Thus, $h_{1}+...+h_{t-1}+h_{t}=(t-1)r+h+2-t(r+1)+r=h-t+2$.

\subsection{Defining a partition $(X_{1},...,X_{t})$ of $V(G)$} For a subset $X\subseteq V(G)$ of vertices of a graph $G$, let $G[X]$ denote the subgraph of $G$ induced by $X$. If $(X_{1},...,X_{t})$ is a partition of $V(G)$, then we define the integer-valued function $f$ as follows:
\begin{center}
$f(X_{1},...,X_{t})=h_{1}\vert X_{1}\vert + h_{2}\vert X_{2}\vert +... + h_{t}\vert X_{t}\vert-(\vert E(G[X_{1}])\vert +...+ \vert E(G[X_{t}])\vert)$.
\end{center}

Consider the partition $(X_{1},...,X_{t})$ of $V(G)$ that
\begin{enumerate}
    \item maximizes $f(X_{1},...,X_{t})$;
    \item minimizes the total number of complete bipartite induced subgraphs $K_{r,r}$ in the $G[X_{i}]$'s in case $h_{i}=r$ and $(1)$ holds.
\end{enumerate}

Using the methods used in \cite{Cat1978a}, we obtain $\Delta(G[X_{i}])\leq h_{i}$ for all $1\leq i\leq t$. 
For the reader's convenience, we write the details from \cite{Cat1978a}.
For each $2\leq i\leq t$, we have
\[
0 \leq f(X_1, \ldots, X_t) - f(X_1 - x, X_2, \ldots, X_i + x, \ldots, X_t),
\]
which expands as
\[
0\leq h_1|X_1| - h_1(|X_1| - 1) + h_i|X_i| - h_i(|X_i| + 1) - |E(X_1)| + |E(X_1 - x)| - |E(X_i)| + |E(X_i + x)|.
\]
Simplifying, we obtain 
$0\leq h_1 - h_i - \deg_{G[X_1]}(x) + \deg_{G[X_i + x]}(x).$
Hence, it follows that
\begin{center}
$\deg_{G[X_1]}(x) \leq h_1 - h_i + \deg_{G[X_i + x]}(x)$ for each $2\leq i\leq t$.     
\end{center}

Clearly, $\deg_{G[X_{1}]}(x)\leq \deg_{G[X_{1}+x]}(x)=h_{1}-h_{1}+\deg_{G[X_{1}+x]}(x)$.
Considering the summation of both sides of this system of inequalities by letting $i$ run from $2$ to $t$, we obtain
\[
t \cdot \deg_{G[X_1]}(x) \leq t h_1 - \sum_{i=1}^{t} h_i + \sum_{i=1}^{t} \deg_{G[X_i + x]}(x).
\]
We note that $h=\Delta(G)$,
$\sum_{i=1}^{t} \deg_{G[X_i + x]}(x) = \deg_G(x)$,
and \( \sum_{i=1}^{t} h_i = h - t + 2 \). Thus, 
\[
t \cdot \deg_{G[X_1]}(x) \leq t h_1 - (h - t + 2) + \deg_G(x) \leq t h_1 + t - 2 \implies 
\deg_{G[X_1]}(x) \leq h_1 + \frac{t - 2}{t}.
\]
Since both \( \deg_{G[X_1]}(x) \) and \( h_1 \) are integers, it follows that
$
\deg_{G[X_1]}(x) \leq h_1.
$
A similar argument applies for any vertex \( x \in X_i \) with \( i \leq t \), yields
$
\deg_{G[X_i]}(x) \leq h_i.
$

\subsection{Main proof}

\begin{claim}\label{claim 3.1}
{\em If $h_{t}>r$, then $\chi_{D_{L}}(G[X_{t}])\leq 2h_{t}-1$.} 
\end{claim}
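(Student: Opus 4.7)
The aim is to show $\chi_{D_{L}}(G[X_{t}])\leq 2h_{t}-1$ under the additional hypothesis $h_{t}>r$. The strategy is to reduce to Theorem~\ref{Theorem 1.4} applied to the connected components of $G[X_{t}]$, and then to rule out both of its exceptional cases using the partition data together with the $(K_{r,r+1},K_{r+1,r+1})$-freeness of $G$.

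First I would invoke the bound $\Delta(G[X_{t}])\leq h_{t}$ already established by the Catlin-style partition argument in Section~3.2. Hence for any connected component $C$ of $G[X_{t}]$, Theorem~\ref{Theorem 1.4} gives $\chi_{D_{L}}(C)\leq 2\Delta(C)-1\leq 2h_{t}-1$ unless $C\cong K_{\Delta(C),\Delta(C)}$ or $C\cong C_{6}$. The $C_{6}$ case is harmless: since $h_{t}>r\geq 4$ we have $h_{t}\geq 5$, so $2h_{t}-1\geq 9>4=\chi_{D_{L}}(C_{6})$. Likewise, if $C\cong K_{\Delta(C),\Delta(C)}$ with $\Delta(C)\leq h_{t}-1$, then $\chi_{D_{L}}(C)=2\Delta(C)\leq 2h_{t}-2$, still within the claimed bound.

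The essential point is to rule out the remaining case $C\cong K_{h_{t},h_{t}}$. Since $h_{t}>r$ forces $h_{t}\geq r+1$, such a component would contain $K_{r+1,r+1}$ as an induced subgraph. Any connected component of $G[X_{t}]$ is an induced subgraph of $G[X_{t}]$, and $G[X_{t}]$ is itself induced in $G$; hence $G$ would contain $K_{r+1,r+1}$ as an induced subgraph, contradicting the $(K_{r,r+1},K_{r+1,r+1})$-free hypothesis.

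The main obstacle I anticipate is the promotion from componentwise list-distinguishing bounds to a single bound on all of $G[X_{t}]$ when that induced subgraph is disconnected: automorphisms of $G[X_{t}]$ may permute isomorphic components, so the raw componentwise maximum need not suffice. I expect this to be handled by observing that the list size $2h_{t}-1$ is strictly larger than each $\chi_{D_{L}}(C)$ (in fact, by at least one color in every surviving case), which leaves enough slack to pick \emph{distinct} proper list-distinguishing colorings on any pair of isomorphic components. This breaks every non-identity permutation of components while keeping each component's internal automorphism group trivial, completing the claim.
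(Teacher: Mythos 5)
Your reduction to Theorem~\ref{Theorem 1.4} and your exclusion of the exceptional case $C\cong K_{h_t,h_t}$ via $h_t\geq r+1$ and $K_{r+1,r+1}$-freeness are exactly right, and match the paper. But the ``main obstacle'' you flag at the end --- disconnectedness of $G[X_t]$ --- is where your argument genuinely breaks, and you miss the hypothesis that dissolves it. The paper splits on $|X_t|$: if $|X_t|\geq 2r$, then $G[X_t]$ is obtained from $G$ by deleting $n-|X_t|\leq n-2r$ vertices, so the $(n-2r+1)$-connectedness of $G$ forces $G[X_t]$ to be \emph{connected}, and Theorem~\ref{Theorem 1.4} applies directly to $G[X_t]$ itself; if $|X_t|<2r$, a rainbow coloring (all vertices receive distinct colors chosen greedily from their lists, which have size $2h_t-1\geq 2r+1>|X_t|$) is proper and distinguishing regardless of connectivity. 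So the componentwise analysis and the recombination step are never needed.

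Your proposed patch for the disconnected case does not work as stated. First, the claim that the list size $2h_t-1$ exceeds $\chi_{D_L}(C)$ ``by at least one color in every surviving case'' is false: for a non-exceptional component $C$ with $\Delta(C)=h_t$, Theorem~\ref{Theorem 1.4} gives only $\chi_{D_L}(C)\leq 2\Delta(C)-1=2h_t-1$, which equals the list size, leaving no slack. Second, even where slack exists, ``pick distinct colorings on isomorphic components'' is not a proof: to kill an automorphism swapping isomorphic components $C_i\cong C_j$ you need colorings $f_i$, $f_j$ such that $f_j\neq f_i\circ\phi^{-1}$ for \emph{every} isomorphism $\phi\colon C_i\to C_j$, and when the two components carry identical lists of size exactly $\chi_{D_L}(C_i)$ the existence of two such inequivalent proper distinguishing list colorings requires an argument (typically a counting lemma) that you do not supply. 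Replacing your final paragraph with the connectivity observation and the small-$|X_t|$ rainbow case closes the gap.
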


\begin{proof}

\noindent \textsc{Case 1:}  Suppose $\vert V_{G[X_{t}]}\vert< 2r$.

As coloring the vertices with distinct colors from their respective lists of size $\vert V_{G[X_{t}]}\vert$ yields a proper distinguishing coloring we have
\[\chi_{D_{L}}(G[X_{t}])\leq \vert V_{G[X_{t}]}\vert \leq 2r-1\leq 2(h_{t}-1)-1< 2h_{t}-1.\] 

\noindent \textsc{Case 2:} Suppose $\vert V_{G[X_{t}]}\vert\geq 2r$. Since $G$ is $(n-2r+1)$-connected, we have that $G[X_{t}]$ is connected.
We note that $\Delta(G[X_{t}])\leq h_{t}$.

\vspace{2mm}
\begin{enumerate}
    \item[] \textsc{Subcase 2.1:} Let $\Delta(G[X_{t}])< h_{t}$.
Then, by Theorem \ref{Theorem 1.4}, we have
\[\chi_{D_{L}}(G[X_{t}])\leq 2(\Delta (G[X_{t}]))\leq 2(h_{t}-1)=2h_{t}-2.\]
    \item[] \textsc{Subcase 2.2:}
    Let $\Delta(G[X_{t}])= h_{t}>r$.
Since $G$ contains no $K_{r+1,r+1}$ as an induced subgraph (and hence no $K_{h_{t}, h_{t}}$), neither does $G[X_{t}]$. 
Thus, $G[X_{t}]\neq K_{\Delta(G[X_{t}]), \Delta(G[X_{t}])}$. 
Moreover, $G[X_{t}]\neq C_{6}$ as $\vert V_{G[X_{t}]}\vert\geq 2r\geq 8$.
Thus, by Theorem \ref{Theorem 1.4}, we have
\[\chi_{D_{L}}(G[X_{t}])\leq 2\Delta(G(X_{t}))-1=2h_{t}-1.\]
\end{enumerate} 
This finishes the proof of Claim \ref{claim 3.1}.
\end{proof}

\begin{claim}\label{claim 3.2}
{\em $\chi_{D_{L}}(G[X_{i}])\leq 2h_{i}-1$ for all $1\leq i\leq t$.}
\end{claim}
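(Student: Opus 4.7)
The plan is to follow the case analysis of Claim \ref{claim 3.1} for each $i$ with $h_i = r$; note that $h_i > r$ can only occur when $i = t$, and in that subcase Claim \ref{claim 3.1} already delivers $\chi_{D_L}(G[X_t]) \leq 2h_t - 1$. First I would split on $|V(G[X_i])|$: if $|V(G[X_i])| < 2r$, then assigning distinct list colors yields $\chi_{D_L}(G[X_i]) \leq |V(G[X_i])| \leq 2r - 1 = 2h_i - 1$. Otherwise $|V(G) \setminus X_i| \leq n - 2r$, so the $(n - 2r + 1)$-connectivity of $G$ forces $G[X_i]$ to be connected; together with $\Delta(G[X_i]) \leq h_i = r$, this makes Theorem \ref{Theorem 1.4} applicable.

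In that connected case, if $\Delta(G[X_i]) < r$ then Theorem \ref{Theorem 1.4} gives $\chi_{D_L}(G[X_i]) \leq 2\Delta(G[X_i]) \leq 2(r-1) \leq 2r - 1$, using the $2\Delta$ bound to absorb the exceptional cases. If $\Delta(G[X_i]) = r$, the exceptional graphs in Theorem \ref{Theorem 1.4} are $C_6$ (ruled out since $|V(G[X_i])| \geq 2r \geq 8$ as $r \geq 4$) and $K_{r,r}$, so the bound $\chi_{D_L}(G[X_i]) \leq 2r - 1$ holds unless $G[X_i] \cong K_{r,r}$.

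Ruling out $G[X_i] \cong K_{r,r}$ is the central step, and I would invoke minimality clause (2) of the partition choice. Suppose toward a contradiction that $G[X_i] \cong K_{r,r}$ with bipartition $A \cup B$, $|A| = |B| = r$, so $|X_i| = 2r$. Since $n > 2r$, some vertex $v$ lies outside $X_i$; using connectivity of $G$ I may assume $v$ is adjacent to some $a \in X_i$, say $a \in A$, with $v \in X_j$ for some $j \neq i$. The plan is to build an alternate partition --- by relocating $a$ into $X_j$, or by swapping $a$ with a suitably chosen vertex of $X_j$ --- that preserves the maximum value of $f$ while strictly reducing the total count of induced $K_{r,r}$'s among the $G[X_k]$'s, contradicting clause (2).

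The hard part will be pinning down this modification. We need (i) the exchange to be $f$-neutral, (ii) $\Delta(G[X_k]) \leq h_k$ to be maintained in every class afterwards, and (iii) no new induced $K_{r,r}$ to appear. Condition (iii) is where the $(K_{r,r+1}, K_{r+1,r+1})$-free hypothesis becomes critical: it forbids $N_G(v) \cap X_i$ from coinciding with $A$ or $B$ (else $G[X_i \cup \{v\}] \cong K_{r,r+1}$), and more generally restricts the bipartite patterns that can appear around $v$. The $f$-optimality condition itself forces $|N_G(a) \cap X_j| \geq h_j$ for every $j \neq i$, and combining these structural restrictions with, if needed, Menger-type rerouting enabled by the $(n-2r+1)$-connectivity should yield the desired modification.
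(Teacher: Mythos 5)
Your overall scaffolding matches the paper's: reduce to $h_i=r$ via Claim \ref{claim 3.1}, split on $|V(G[X_i])|<2r$ versus $\geq 2r$, use $(n-2r+1)$-connectivity to get connectedness of $G[X_i]$, and apply Theorem \ref{Theorem 1.4} once the exceptional cases $C_6$ and $K_{r,r}$ are excluded. All of that is correct and is exactly what the paper does.

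The genuine gap is in the central step, which you explicitly leave unresolved: deriving a contradiction from the minimality clause (2) when $G[X_i]\cong K_{r,r}$. Your plan is a \emph{single} modification (relocate $a$, or swap $a$ with a vertex of $X_j$) that keeps $f$ maximal and strictly decreases the number of induced $K_{r,r}$'s. No single move can be guaranteed to do this: by the degree count ($\deg_G(x_0)\leq\Delta(G)$ against $\sum_p h_p=h-t+2$) there is some $j\neq i$ in which $x_0$ has exactly $h_j$ neighbours, so the move is $f$-neutral, but nothing prevents a \emph{new} induced $K_{r,r}$ containing $x_0$ from appearing inside $X_j\cup\{x_0\}$ --- indeed, clause (2) tells you precisely that such a new copy \emph{must} appear, since otherwise the count would drop. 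The $(K_{r,r+1},K_{r+1,r+1})$-freeness does not block this first new copy; your observation that $N_G(v)\cap X_i$ cannot equal $A$ or $B$ is correct but controls only the configuration around $X_i$, not what forms in $X_j$. The paper resolves this with an iteration you are missing: each forced new copy $C_{m}$ leaves behind a $K_{r-1,r}$ in the class the moved vertex came from, and by finiteness of $G$ the chain of moves must eventually produce a copy $C_m$ overlapping an earlier residue $C_k-x_k$; at that closing step the vertices $x_{m-1}$, $x_k$ together with $C_k-x_k\cong K_{r-1,r}$ form an induced $K_{r,r+1}$ in $G$, and only \emph{there} does the forbidden-subgraph hypothesis deliver the contradiction. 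Without this chain argument (or a substitute for it), the exclusion of $K_{r,r}$ --- and hence the claim --- is not established; the appeal to ``Menger-type rerouting'' does not supply it, since connectivity plays no role in this step of the paper's proof.
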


\begin{proof}
In view of Claim \ref{claim 3.1}, we may assume that $h_{i}=r$ for all $1\leq i\leq t$.
We fix $1\leq i\leq t$.
\noindent \textsc{Case 1:}  If $\vert V_{G[X_{i}]}\vert< 2r$, then
$\chi_{D_{L}}(G[X_{i}])\leq \vert V_{G[X_{i}]}\vert \leq 2r-1= 2h_{i}-1.$ 

\noindent \textsc{Case 2:} If $\vert V_{G[X_{i}]}\vert\geq 2r$, then since $G$ is $(n-2r+1)$-connected, we have that $G[X_{i}]$ is connected.
We recall that $\Delta(G[X_{i}])\leq h_{i}$.

\vspace{2mm}
\begin{enumerate}
    \item[] \textsc{Subcase 2.1:} Let $\Delta(G[X_{i}])< h_{i}$.
Then, by Theorem \ref{Theorem 1.4}, we have
\[\chi_{D_{L}}(G[X_{i}])\leq 2(\Delta (G[X_{i}]))\leq 2(h_{i}-1)=2h_{i}-2.\]
    \item[] \textsc{Subcase 2.2:}
    Let $\Delta(G[X_{i}])= h_{i}=r$.

\begin{subclaim}\label{Subclaim 3.3}
{\em There is no induced subgraph $K_{\Delta(G[X_{i}]), \Delta(G[X_{i}])}$ in $G[X_{i}]$.}    
\end{subclaim}

\begin{proof}
For the sake of contradiction, suppose that $G[X_{i}]$ contains an induced subgraph $C_{0}=K_{\Delta(G[X_{i}]), \Delta(G[X_{i}])}=K_{h_{i},h_{i}}=K_{r,r}$. Let $x_{0}\in C_{0}$. 
\vspace{2mm}

\textbf{Step 1:} 
We note that $x_{0}$ is adjacent to  $\Delta(G[X_{i}])=h_{i}$ vertices in $C_{0}=K_{h_{i}, h_{i}}$.
{\em We claim there exists $1\leq j\leq t$ such that $j\neq i$, and $x_{0}$ is adjacent to $h_{j}$ or fewer vertices in $X_{j}$.} 
Otherwise, if $x_{0}$ is adjacent to at least $h_{j}+1$ vertices in each $X_{j}$ for $j\neq i$, then 

\begin{center}
$\deg_{G}(x_{0})\geq h_{i}+\sum_{j\neq i, 1\leq j\leq t}(h_{j}+1)=\sum_{1\leq p\leq t}h_{p}+(t-1)=(h-t+2)+t-1=h+1>h$,     
\end{center}

since $\sum_{1\leq p\leq t}h_{p}=h-t+2$, which is a contradiction to the assumption that $h=\Delta(G)$. We fix such a $j$. 
\vspace{2mm}

\textbf{Step 2:} Suppose, we move $x_{0}$ from $X_{i}$ to $X_{j}$. Then the maximality of $f$ is preserved as
$f(X_{1},..., X_{t}) = f(X_{1},.., X_{i}-x_{0},..., X_{j}+x_{0},.., X_{t})$.
\vspace{2mm}

\textbf{Step 3:} We obtain the desired contradiction to the assumption that $G[X_{i}]$ contains $C_{0}$. To avoid violating condition (2), i.e. the partition $(X_{1},..., X_{t})$ minimizes the total number of induced subgraph $K_{r,r}$ in the $G[X_{i}]$'s in case $h_{i}=r$ and $f(X_{1},..., X_{t})$ is maximum, 
the destruction of $C_{0}=K_{r,r}$ in $G[X_{i}\backslash \{x_{0}\}]$ is followed by the formation of another induced subgraph $K_{r,r}$ in $G[X_{j}\cup \{x_{0}\}]$, say $C_{1}$. 
Then the complete bipartite graph $C_{0}\backslash\{x_{0}\}=K_{r-1,r}$ is left behind in $G[X_{i}\backslash \{x_{0}\}]$ (See Fig. \ref{Figure 1}).

\newcommand{\rsize}{5} 

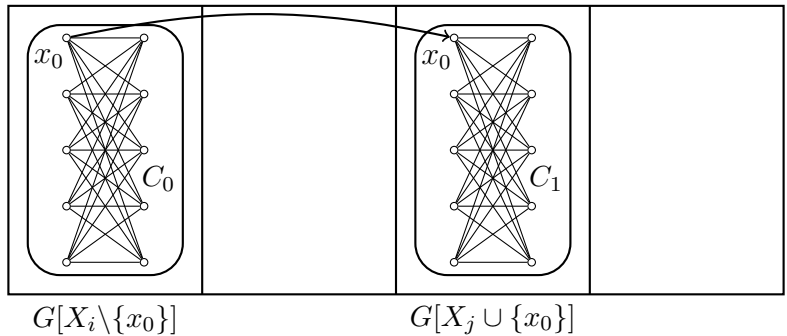
\begin{figure}[h]
\centering

\begin{tikzpicture}[scale=0.85, every node/.style={inner sep=1.5pt}]

\def\W{12}   
\def\H{4.5}  
\def\sqW{\W/4} 

\draw[thick] (0,0) rectangle (\W,\H);
\foreach \i in {1,2,3} {
  \draw[thick] (\i*\sqW,0) -- (\i*\sqW,\H);
}

\def\inset{0.3}
\coordinate (A0) at (\inset,\inset);
\coordinate (B0) at (\sqW-\inset,\H-\inset);

\draw[thick,rounded corners=12pt] (A0) rectangle (B0);

\node[font=\small, anchor=south, yshift=-6mm, xshift=+7mm] 
  at ($ (A0)!0.5!(B0) $ |- B0) {$C_{0}$};

\pgfmathsetmacro{\leftx}{\inset+0.25*(\sqW-2*\inset)}   
\pgfmathsetmacro{\rightx}{\inset+0.75*(\sqW-2*\inset)}
\pgfmathsetmacro{\topY}{\H-\inset-0.2}   
\pgfmathsetmacro{\bottomY}{\inset+0.2}

\pgfmathtruncatemacro{\denom}{\rsize-1}
\ifnum\denom<1
  \pgfmathsetmacro{\denom}{1}
\fi

\foreach \i in {1,...,\rsize} {
  \pgfmathsetmacro{\y}{\bottomY + (\topY-\bottomY)*(\rsize-\i)/\denom}
  \node[draw,circle,fill=white,inner sep=1pt] (L\i) at (\leftx,\y) {};
  \node[draw,circle,fill=white,inner sep=1pt] (R\i) at (\rightx,\y) {};
}
\node[below left=1.5pt and -2pt] at (L1.south west) {$x_{0}$};

\foreach \i in {1,...,\rsize} {
  \foreach \j in {1,...,\rsize} {
    \draw (L\i) -- (R\j);
  }
}

\coordinate (A1) at (2*\sqW+\inset,\inset);
\coordinate (B1) at (3*\sqW-\inset,\H-\inset);

\draw[thick,rounded corners=12pt] (A1) rectangle (B1);

\node[font=\small, anchor=south, yshift=-6mm, xshift=7mm] 
  at ($ (A1)!0.5!(B1) $ |- B1) {$C_{1}$};

\pgfmathsetmacro{\leftxC}{2*\sqW+\inset+0.25*(\sqW-2*\inset)}   
\pgfmathsetmacro{\rightxC}{2*\sqW+\inset+0.75*(\sqW-2*\inset)}
\pgfmathsetmacro{\topYC}{\H-\inset-0.2}   
\pgfmathsetmacro{\bottomYC}{\inset+0.2}

\foreach \i in {1,...,\rsize} {
  \pgfmathsetmacro{\y}{\bottomYC + (\topYC-\bottomYC)*(\rsize-\i)/\denom}
  \node[draw,circle,fill=white,inner sep=1pt] (Lc\i) at (\leftxC,\y) {};
  \node[draw,circle,fill=white,inner sep=1pt] (Rc\i) at (\rightxC,\y) {};
}
\node[below left=2pt and -2pt] at (Lc1.south west) {$x_{0}$};

\foreach \i in {1,...,\rsize} {
  \foreach \j in {1,...,\rsize} {
    \draw (Lc\i) -- (Rc\j);
  }
}
\node[font=\small, anchor=south, yshift=-25mm, xshift=0mm] 
  at ($ (A1)!0.5!(B1) $ |- B1) {$G[X_{j}\cup \{x_{0}\}]$};
  
\node[font=\small, anchor=south, yshift=-25mm, xshift=0mm] 
  at ($ (A0)!0.5!(B0) $ |- B0) {$G[X_{i}\backslash \{x_{0}\}]$};
\draw[->,thick,bend left=12] (L1) to (Lc1);

\end{tikzpicture}
\caption{\em After vertex $x_{0}$ is moved from $X_{i}$ to $X_{j}$, $C_{0}\backslash \{x_{0}\}=K_{r-1,r}$ is left in $G[X_{i}\backslash\{x_{0}\}]$, and $C_{1}=K_{r,r}$ is formed in $G[X_{j}\cup \{x_{0}\}]$ when $r=5$.}
\label{Figure 1}
\end{figure}

We repeat this process by picking a vertex $x_{1}\neq x_{0}$ in $C_{1}$ and removing it from $G[X_{j}\cup \{x_{0}\}]$ to create another induced subgraph $K_{r,r}$ in $G[X_{k}\cup \{x_{1}\}]$ for some $1\leq k\leq t$, $k\neq j$, say $C_{2}$. This leaves behind a complete bipartite graph $K_{r-1,r}$ in $G[X_{j}\backslash \{x_{1}\}]$. 
Continuing in this fashion, we will obtain a sequence $\{C_{i}\}_{i\geq 0}$.
Since $G$ is finite, there will be a vertex $x_{m-1}$ that will move to $X_{l}$ for some $1\leq l\leq t$, where it will form an induced subgraph $C_{m}=K_{r,r}$ (a member of the sequence $\{C_{i}\}_{i\geq 0}$) in $G[X_{l}\cup \{x_{m-1}\}]$ and $C_{m}$ overlaps with another member $C_{k}$ of the sequence $\{C_{i}\}_{i\geq 0}$ for some $k<m-1$. 
Without loss of generality, we choose $x_{m-1}\neq x_{k}$ from $C_{m-1}$.

\vspace{2mm}

Then $C_{m}-x_{m-1} \cong C_{k}-x_{k}$, a complete bipartite graph $K_{r-1,r}$ in $G[X_{l}]$, is left behind as some vertex $x_{k}$
in the sequence
$x_{0}, x_{1},..., x_{m-2}$
was moved out.
Finally, $x_{m-1}, x_{k}$, and $C_{m}-x_{m-1}\cong C_{k}-x_{k}=K_{r-1,r}$ forms an induced complete bipartite subgraph $K_{r+1,r}$ of $G$ which contradicts the hypothesis of the Theorem (see Fig. \ref{Figure 2}).
\vspace{2mm}

Thus, there is no induced subgraph $K_{\Delta(G[X_{i}]), \Delta(G[X_{i}])}$ in $G[X_{i}]$.
\end{proof}

\newcommand{\rsize}{5} 

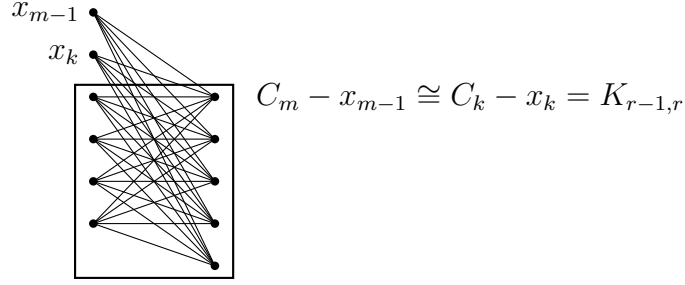
\begin{figure}[h]
\centering

\begin{tikzpicture}[scale=0.8]

\def\sqsize{4} 
\def\yshift{0.3} 
\draw[thick] (0.7,0.1) rectangle (3.3,3.3);

\node[anchor=north west] at (3.5, 3.5) {$C_{m}-x_{m-1} \cong C_{k}-x_{k}=K_{r-1,r}$};

\foreach \i in {1,...,4} {
  \fill (1,\sqsize+\yshift-0.5-\i*0.7) circle (2pt);
}

\foreach \j in {1,...,5} {
  \fill (\sqsize-1,\sqsize+\yshift-0.5-\j*0.7) circle (2pt);
}

\foreach \i in {1,...,4} {
  \foreach \j in {1,...,5} {
    \draw (1,\sqsize+\yshift-0.5-\i*0.7) -- (\sqsize-1,\sqsize+\yshift-0.5-\j*0.7);
  }
}

\fill (1,\sqsize+\yshift-0.5) circle (2pt) node[left=1pt] {$x_{k}$};
\fill (1,\sqsize+\yshift+0.2) circle (2pt) node[left=1pt] {$x_{m-1}$};

\foreach \j in {1,...,5} {
  \draw (1,\sqsize+\yshift-0.5) -- (\sqsize-1,\sqsize+\yshift-0.5-\j*0.7);
  \draw (1,\sqsize+\yshift+0.2) -- (\sqsize-1,\sqsize+\yshift-0.5-\j*0.7);
}
\end{tikzpicture}
\caption{\em Formation of the induced subgraph $K_{r+1,r}$ of $G$ when $r=5$.}
\label{Figure 2}
\end{figure}

By Subclaim \ref{Subclaim 3.3}, we have $G[X_{i}]\neq K_{\Delta(G[X_{i}]), \Delta(G[X_{i}])}$. 
Moreover, $G[X_{i}]\neq C_{6}$ as $\vert V_{G[X_{i}]}\vert\geq 2r\geq 8$.
By Theorem \ref{Theorem 1.4}, we have
\[\chi_{D_{L}}(G[X_{i}])\leq 2\Delta(G(X_{i}))-1=2h_{i}-1.\]
\end{enumerate}
This finishes the proof of Claim \ref{claim 3.2}.
\end{proof}

\begin{defn}\label{Definition 3.4}
We denote init$_{k}(L)$ by the first $k$ elements of a list $L$.    
\end{defn}

In \cite[Claim 4.4]{BMG2025}, we proved 
$\chi_{D_{L}}(G)\leq \sum_{i=1}^{t} \chi_{D_{L}}(G[X_{i}])$. For the reader's convenience, we provide the details here as well.

\begin{claim}{(cf. \cite[Claim 4.4]{BMG2025})}\label{claim 3.5}
$\chi_{D_{L}}(G)\leq \sum_{i=1}^{t} \chi_{D_{L}}(G[X_{i}])$.
\end{claim}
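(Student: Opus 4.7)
The plan is to prove $\chi_{D_{L}}(G)\leq \sum_{i=1}^{t} \chi_{D_{L}}(G[X_{i}])$ by combining proper distinguishing colorings of each induced subgraph $G[X_{i}]$ through a slot decomposition of the given lists, following the strategy of \cite[Claim 4.4]{BMG2025}.

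Set $k_{i}:=\chi_{D_{L}}(G[X_{i}])$, $a_{i}:=\sum_{j<i}k_{j}$, and $k:=\sum_{i=1}^{t}k_{i}$. Given any list assignment $L$ with $|L(v)|=k$ for every $v\in V(G)$, I first fix an ordering on each $L(v)$. For $v\in X_{i}$, I define the sublist
\[
L_{i}(v):=\text{init}_{a_{i}+k_{i}}(L(v))\setminus \text{init}_{a_{i}}(L(v)),
\]
which has size $k_{i}$; the sublists $L_{1}(v),\dots,L_{t}(v)$ partition $L(v)$ into $t$ consecutive blocks indexed by the parts. Since $|L_{i}(v)|=k_{i}=\chi_{D_{L}}(G[X_{i}])$, the definition of the list-distinguishing chromatic number applied to $G[X_{i}]$ with list assignment $L_{i}|_{X_{i}}$ supplies a proper distinguishing coloring $f_{i}$ of $G[X_{i}]$ with $f_{i}(v)\in L_{i}(v)$ for all $v\in X_{i}$. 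I then glue these into a candidate coloring $f\colon V(G)\to\bigcup_{v}L(v)$ defined by $f(v):=f_{i}(v)$ for $v\in X_{i}$; observe that $f(v)\in L(v)$ by construction.

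The remaining task is to verify that $f$ is a proper distinguishing coloring of $G$. Within each $X_{i}$, properness and distinguishability on $G[X_{i}]$ are immediate from $f_{i}$. For a cross-edge $uv\in E(G)$ with $u\in X_{i}$, $v\in X_{j}$ and $i\neq j$, I would use that $f(u)$ and $f(v)$ come from positionally disjoint blocks of $L(u)$ and $L(v)$ respectively to conclude $f(u)\neq f(v)$. For the distinguishing property, any automorphism $\sigma$ of $G$ preserving $f$ must respect the slot structure and hence the partition $(X_{1},\dots,X_{t})$; the distinguishing property of each $f_{i}$ then forces $\sigma|_{X_{i}}=\text{id}$ for every $i$, so $\sigma=\text{id}$.

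The step I expect to be the main obstacle is the cross-edge properness check: upgrading the disjointness of the \emph{positional} slots within each individual list into disjointness of the actual \emph{color values} assigned to the endpoints of an edge between different parts. This is exactly where the positional structure encoded by $\text{init}_{k}$ together with the freedom to pre-order each list is exploited in \cite[Claim 4.4]{BMG2025}; once this step is secured, the assembly of the subcolorings $f_{i}$ into a proper distinguishing coloring of $G$, and the reduction of the global distinguishing check to the distinguishing check on each $G[X_{i}]$, become routine bookkeeping.
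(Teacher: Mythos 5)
Your proposal follows the paper's proof of Claim \ref{claim 3.5} essentially step for step: the block decomposition you define, $L_i(v)=\mathrm{init}_{a_i+k_i}(L(v))\setminus\mathrm{init}_{a_i}(L(v))$, is exactly the paper's $L^{i}(v)=\mathrm{init}_{k_i}\bigl(L(v)\setminus\bigcup_{j<i}L^{j}(v)\bigr)$, and the gluing of the colorings $f_i$ and the two verification steps (properness, distinguishability) are the same. The problem is the step you yourself flag as ``the main obstacle'' and then leave open: properness on cross-edges. The mechanism you invoke --- that $f(u)$ and $f(v)$ come from ``positionally disjoint blocks'' --- does not give $f(u)\neq f(v)$, because the blocks are disjoint only as positions within a \emph{single} list; for distinct vertices the lists $L(u)$ and $L(v)$ are unrelated, so block $i$ of $L(u)$ and block $j$ of $L(v)$ may coincide as sets of colors. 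Concretely, with $t=2$, $k_1=k_2=1$, $L(u)=(a,b)$, $L(v)=(b,a)$, $u\in X_1$, $v\in X_2$, your construction forces $f(u)=a=f(v)$ on the cross-edge $uv$. The same value-disjointness is also what is needed for your assertion that a color-preserving automorphism ``respects the slot structure and hence the partition,'' so both halves of the verification hinge on it. Pre-ordering the lists does not obviously repair this: arranging that the prescribed blocks are value-disjoint along every cross-edge simultaneously is essentially the list-coloring obstruction (it is why $\chi_{\ell}(K_{n,n})$ is large even though both sides induce edgeless graphs), so this cannot be dismissed as bookkeeping for an arbitrary partition.

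To your credit, you have located exactly where the work must happen: the paper's own proof disposes of the cross-edge case with the single assertion that ``the sets of colors used to color $G[X_i]$ and $G[X_j]$ are different,'' which is precisely the unestablished disjointness above. So your route is the same as the paper's, but as written your proposal is not a complete proof --- the missing step is a genuine gap, not a routine one, and any honest completion would have to either justify the color-disjointness (perhaps using special properties of the partition $(X_1,\dots,X_t)$ chosen in Section 3, which neither you nor the paper invokes here) or replace the block-splitting argument altogether.
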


\begin{proof}
Let $k_{i}=\chi_{D_{L}}(G[X_{i}])$ for each $1\leq i\leq t$. Assign any list $L(v)$ to each vertex $v \in V(G)$ such that $\vert L(v)\vert= \sum_{i=1}^{t}k_{i}$. We define a coloring $f$ for the vertices of $G$ as follows:

\begin{enumerate}
    \item Let $L^{1}(v):=$ init$_{k_{1}}(L(v))$ for each vertex $v\in V(G)$. 
    \vspace{2mm}
    
    Let $L^{1}=\{L^{1}(v)\}_{v\in V_{G[X_{1}]}}$. Since $G[X_{1}]$ is properly $L^{1}$-distinguishable (say by coloring $f^1$), we define $f\restriction V_{G[X_{1}]}:=f^{1}$.
    \vspace{2mm}
    
    \item For any $1<i\leq t$, let $L^{i}(v):=$ init$_{k_{i}}(L(v)\backslash \sum_{k= 1}^{i-1} L^{k}(v)) $ for each vertex $v\in V(G)$. 
    \vspace{2mm}
    
    Let $L^{i}=\{L^{i}(v)\}_{v\in V_{G[X_{i}]}}$. Since $G[X_{i}]$ is properly $L^{i}$-distinguishable (say by coloring $f^i$), we define $f\restriction V_{G[V_{i}]}:=f^{i}$.
\end{enumerate}  

If $\phi\in Aut(G)$ preserves $f$, then the range of $G[X_{i}]$ with respect to $\phi$ is $G[X_{i}]$, for all $1\leq i\leq t$. Thus, $\phi\restriction G[X_{i}]$ is an automorphism of $G[X_{i}]$ preserving $f$. Since the above defined colorings of $G[X_{i}]$'s are distinguishing, we have that $\phi\restriction G[X_{i}]$ is a trivial automorphism for all $1\leq i\leq t$. Thus, $f$ is a distinguishing coloring of $G$. 
To show $f$ is a proper coloring of $G$, we pick any $x,y\in V(G)$ such that $\{x,y\}\in E_{G}$. If $x,y\in V_{G[X_{i}]}$ for some $1\leq i\leq t$, then $f(x)\neq f(y)$, as $f\restriction (V_{G[X_{i}]})$ is a proper coloring. If $x\in V_{G[X_{i}]}$ and $y\in V_{G[X_{j}]}$ such that $i\neq j$, then $f(x)\neq f(y)$, as $f$ is defined in a way such that the sets of colors used to color $G[X_{i}]$ and $G[X_{j}]$ are different.
\end{proof} 

By applying Claims 
\ref{claim 3.2} and 
\ref{claim 3.5}, we get,
\begin{equation*}
\begin{aligned}
 \chi_{D_{L}}(G) \leq \sum_{i=1}^{t} \chi_{D_{L}}(G[X_{i}])
       \leq \sum_{i=1}^{t}(2h_{i}-1)
       = 2\sum_{i=1}^{t} h_{i}-t
       =2(h-t+2)-t 
      & = 2\Delta(G)- (3t-4).
\end{aligned}
\end{equation*}
\section{Applications}

Throughout the section, we fix integers $r>6$ and $n$ with $3r+3\le n\le r^2$. We define $a=n-2r+1$ and $b=2r-1$. 

\begin{thm}\label{Theorem 4.1}
{\em Let $X$ be any graph on $a=n-2r+1$ vertices satisfying the following properties:
\begin{enumerate}[(i)]
    \item $\alpha(X)\le r-1$, 
    \item $\kappa(X)\ge a-b$, and
    \item $\Delta(X)\ge r+2$.
\end{enumerate}

Let $H$ be any graph on $b$ vertices with $\alpha(H)\le r-1$. Then $G=X\vee H$ satisfies the following:
\begin{enumerate}
    \item $G$ is $(n-2r+1)$-connected,
    \item $G$ contains no induced subgraph isomorphic to $K_{r,r+1}$ or $K_{r+1,r+1}$,
    \item $\Delta(G)\ge 3r+1$. Thus, $\displaystyle \frac{\Delta(G) + 2}{r + 1} \ge 3$.
\end{enumerate}
Consequently, by Theorem \ref{Theorem 1.7}, we obtain $\chi_{D_{L}}(G)\leq 2\Delta(G)-5$ which is a better bound for $\chi_{D_{L}}(G)$ than $\chi_{D_{L}}(G)\leq 2\Delta(G)-1$.
}
\end{thm}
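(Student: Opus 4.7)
\textbf{Plan for the proof of Theorem~\ref{Theorem 4.1}.}
The strategy is to verify the three structural conclusions $(1)$, $(2)$, $(3)$ about $G=X\vee H$ directly from the hypotheses on $X$ and $H$, and then observe that together with the numerical estimate $\Delta(G)+2\ge 3(r+1)$ they precisely match the assumptions of Theorem~\ref{Theorem 1.7}, delivering the advertised bound $\chi_{D_L}(G)\le 2\Delta(G)-5$.

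For $(1)$, let $S$ be a vertex cut of $G$; we show $|S|\ge a$. Since every vertex of $V(X)\setminus S$ is adjacent to every vertex of $V(H)\setminus S$ via the join, if both sides survive in $G-S$ then $G-S$ is connected, a contradiction. Hence either $V(X)\subseteq S$ (giving $|S|\ge a$ immediately) or $V(H)\subseteq S$. In the latter case, write $S=V(H)\cup S'$ with $S'\subseteq V(X)$; then $X-S'$ must be disconnected (and must have at least two vertices), so $|S'|\ge \kappa(X)\ge a-b$ by hypothesis~(ii), yielding $|S|\ge b+(a-b)=a$. When $a\le b$ the bound $\kappa(X)\ge a-b$ is vacuous, but then $|S|\ge b\ge a$ is automatic from $V(H)\subseteq S$.

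For $(2)$, suppose $G$ contains an induced $K_{r,r+1}$ with independent parts $A$ and $B$, where $|A|=r$. Since every vertex of $V(X)$ is adjacent in $G$ to every vertex of $V(H)$, any simultaneous nonempty intersection $A\cap V(X)\ne\emptyset$ and $A\cap V(H)\ne\emptyset$ would produce an edge inside $A$; hence $A$ lies wholly in $V(X)$ or in $V(H)$. Consequently $A$ is an independent set of size $r$ in $X$ or in $H$, contradicting $\alpha(X)\le r-1$ or $\alpha(H)\le r-1$. The argument for $K_{r+1,r+1}$ is identical since both parts are only larger. For $(3)$, any vertex $v\in V(X)$ realizing $\Delta(X)$ satisfies $\deg_G(v)=\deg_X(v)+|V(H)|\ge (r+2)+(2r-1)=3r+1$, so $\Delta(G)\ge 3r+1$ and $\lfloor(\Delta(G)+2)/(r+1)\rfloor\ge 3$.

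With $(1)$--$(3)$ in hand, the hypotheses of Theorem~\ref{Theorem 1.7} are all verified: $G$ has order $n>2r$, is $(n-2r+1)$-connected, is $(K_{r,r+1},K_{r+1,r+1})$-free, and $4\le r\le \Delta(G)/2$ follows from $\Delta(G)\ge 3r+1>2r$. Taking $t=\lfloor(\Delta(G)+2)/(r+1)\rfloor\ge 3$, Theorem~\ref{Theorem 1.7} yields $\chi_{D_{L}}(G)\le 2\Delta(G)-(3t-4)\le 2\Delta(G)-5$, as claimed. I expect the connectivity step $(1)$ to be the principal technical point: the case split on whether hypothesis~(ii) is active ($a>b$) or vacuous ($a\le b$) must be made explicit, and one has to argue carefully that any cut of $G$ is forced to absorb an entire side of the join before it can separate the other.
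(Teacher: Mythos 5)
Your proposal is correct, and parts (2) and (3) coincide with the paper's argument: independent sets of $X\vee H$ lie entirely in $V(X)$ or entirely in $V(H)$, so $\alpha(G)\le r-1$ kills both forbidden bigraphs, and any vertex of $X$ of degree $\ge r+2$ gains $b=2r-1$ join-neighbours, giving $\Delta(G)\ge 3r+1$ and hence $t=\lfloor(\Delta(G)+2)/(r+1)\rfloor\ge 3$, so Theorem~\ref{Theorem 1.7} yields $2\Delta(G)-(3t-4)\le 2\Delta(G)-5$.

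The one place you diverge is the connectivity step (1). The paper invokes the closed-form identity $\kappa(X\vee H)=\min\{\kappa(X)+b,\ \kappa(H)+a,\ \delta(X\vee H)\}$ for joins and checks that each of the three terms is at least $a$ (the middle one trivially, the last via $\delta(X)\ge\kappa(X)$). You instead run a direct vertex-cut argument: a cut $S$ must swallow one whole side of the join, and if it swallows $V(H)$ then the remainder $S'=S\cap V(X)$ must disconnect $X$, forcing $|S|\ge b+\kappa(X)\ge a$. Your route is more elementary and self-contained -- it does not rely on the (uncited) join-connectivity formula -- and your explicit handling of the degenerate case $a\le b$, where hypothesis (ii) is vacuous but $V(H)\subseteq S$ already gives $|S|\ge b\ge a$, is a point the paper's formula absorbs silently. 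Both arguments are sound; yours trades a one-line appeal to a known fact for a short case analysis that a reader can verify without outside references.
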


\begin{proof}
The bipartitions of an induced subgraph $K_{r,r+1}$ or $K_{r+1,r+1}$ are independent sets, so the bounds $\alpha(X)\leq r-1$ and $\alpha(H)\le r-1$ prevent $G$ from containing $K_{r,r+1}$ and $K_{r+1,r+1}$ as induced subgraphs. 
Since $G=X\lor H$, we have 
$\kappa(G)=\min\{\kappa(X)+b,\kappa(H)+a,\delta(G)\}$.
Since $\kappa(X)\ge a-b$ by assumption, we have $\kappa(X)+b\geq a$. Moreover, $\delta(G)=\delta(X\lor H)=min\{\delta(X)+b,\delta(H)+a\}\geq a$ since $\delta(X)\ge \kappa(X)$. Thus, $\kappa(G)\geq a$.
Consequently, $G$ is $a$-connected.
Finally, if $x\in V(X)$ such that $\deg_X(x)\ge r+2$ (possible by our assumption $\Delta(X)\geq r+2$), then
$\deg_G(x)=\deg_X(x)+b\geq (r+2)+(2r-1)= 3r+1$. Thus, $\Delta(G)\ge 3r+1$.
\end{proof}

\begin{example}\label{Example 4.2}
The following shows that $X=K_{a}$ satisfies $(i)-(iii)$ of Theorem \ref{Theorem 4.1}.

\begin{enumerate}[$(i)$]
    \item Since $\alpha(X) = 1$ and $r>6$, we have $\alpha(X)< r-1$.
    
    \item Since $b=2r-1> 11$, we have $\kappa(X) = a-1> a-b$. 
    
    \item Clearly, $\Delta(X) = a-1=n-2r\geq (3r+3)-2r=r+3> r+2$.
\end{enumerate}
\end{example}
\subsection{Paley graph}

\begin{prop}\label{Proposition 4.3}
{\em Let $a$ be a power of a prime with
$a\equiv1\pmod4$ satisfying 
$2r+5 \leq a \leq \min\{4r-3,\ (r-1)^2\}$.
Then the Paley graph $X=P(a)$ satisfies $(i)-(iii)$ of Theorem \ref{Theorem 4.1}.}
\end{prop}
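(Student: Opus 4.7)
The plan is to verify the three conditions (i), (ii), (iii) of Theorem \ref{Theorem 4.1} for $X = P(a)$ one by one, using standard facts about Paley graphs together with Lemmas \ref{Lemma 2.2} and \ref{Lemma 2.7} from the preliminaries.

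First I would recall (or briefly quote) that $P(a)$ is a $\tfrac{a-1}{2}$-regular, connected, edge-transitive, strongly regular graph whose three distinct eigenvalues are $\tfrac{a-1}{2}$, $\tfrac{-1+\sqrt{a}}{2}$, and $\tfrac{-1-\sqrt{a}}{2}$; in particular $\lambda_{\min}(P(a)) = \tfrac{-1-\sqrt{a}}{2}$. Once this is on the table, conditions (iii) and (ii) fall out by direct arithmetic. For (iii), $\Delta(P(a)) = \tfrac{a-1}{2} \geq \tfrac{(2r+5)-1}{2} = r+2$, using the lower bound $a \geq 2r+5$. For (ii), the excerpt already notes that $P(a)$ is connected and edge-transitive, so Lemma \ref{Lemma 2.7} gives $\kappa(P(a)) = \delta(P(a)) = \tfrac{a-1}{2}$; the required inequality $\tfrac{a-1}{2} \geq a - (2r-1) = a - b$ rearranges to $a \leq 4r-3$, which is part of the hypothesis.

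The main step is condition (i), and this is where Lemma \ref{Lemma 2.2} (Hoffman's bound) enters. I would apply it to the $\tfrac{a-1}{2}$-regular graph $P(a)$ with $\lambda_{\min} = \tfrac{-1-\sqrt{a}}{2}$ and simplify:
\begin{equation*}
\alpha(P(a)) \;\leq\; a \cdot \frac{-\lambda_{\min}}{\tfrac{a-1}{2} - \lambda_{\min}} \;=\; a \cdot \frac{(1+\sqrt{a})/2}{(a+\sqrt{a})/2} \;=\; a \cdot \frac{1+\sqrt{a}}{\sqrt{a}\,(\sqrt{a}+1)} \;=\; \sqrt{a}.
\end{equation*}
Since $\alpha(P(a))$ is an integer, $\alpha(P(a)) \leq \lfloor \sqrt{a} \rfloor$, and the hypothesis $a \leq (r-1)^2$ yields $\sqrt{a} \leq r-1$, giving $\alpha(P(a)) \leq r-1$ as required.

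I do not foresee a serious obstacle: the argument is essentially a three-line check, with the only delicate step being the Hoffman calculation, which telescopes cleanly to $\sqrt{a}$ precisely because of the special form of the Paley eigenvalues. It is worth remarking that the three bounds $a \geq 2r+5$, $a \leq 4r-3$, and $a \leq (r-1)^2$ in the hypothesis of Proposition \ref{Proposition 4.3} are exactly the tight arithmetic constraints forced, respectively, by conditions (iii), (ii), and (i) of Theorem \ref{Theorem 4.1}, which is why the statement is phrased precisely with this interval for $a$.
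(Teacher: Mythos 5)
Your proposal is correct and follows essentially the same route as the paper: Hoffman's bound (Lemma \ref{Lemma 2.2}) with $\lambda_{\min}=\frac{-1-\sqrt{a}}{2}$ to get $\alpha(P(a))\le\sqrt{a}\le r-1$, Watkins' theorem (Lemma \ref{Lemma 2.7}) to get $\kappa(P(a))=\frac{a-1}{2}\ge a-b$ from $a\le 4r-3$, and regularity with $a\ge 2r+5$ for $\Delta(P(a))\ge r+2$. The arithmetic in each step matches the paper's computation.
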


\begin{proof}
The graph $P(a)$ is $(\frac{a-1}{2})$-regular on $a$ vertices with the least eigenvalue $\frac{-1-\sqrt a}{2}$. Thus,
\[
\alpha(P(a))
\le a\cdot\frac{-(\frac{-1-\sqrt a}{2})}
{\frac{a-1}{2} - (\frac{-1-\sqrt a}{2})}
= a\cdot\frac{1+\sqrt a}{a+\sqrt a}
= \sqrt a
\]

by Lemma \ref{Lemma 2.2}.  As $a\le (r-1)^2$ by assumption, we obtain $\alpha(X)\le r-1$.
Now, $X$ is connected, edge-transitive, and $(\frac{a-1}{2})$-regular.  By Lemma \ref{Lemma 2.7}, $\kappa(X)=\delta(X)=\frac{a-1}{2}$. Since $4r-3\geq a$ and $b=2r-1$, we obtain $\kappa(X)\geq a-b$.
Since $X$ is $(\frac{a-1}{2})$-regular and $a\geq 2r+5$, we have $\Delta(X)=\frac{a-1}{2}\geq r+2$. 
\end{proof}

\subsection{Cayley graph of the dihedral group}

\begin{prop}\label{Proposition 4.4}
{\em Suppose $a=2m\geq r+8$ is an even integer and $1\leq k\leq 5$ is any integer. 
Let $D_{2m}=\langle \rho,\sigma : \rho^m=\sigma^2=1,\ \sigma\rho\sigma=\rho^{-1}\rangle$ 
be the dihedral group of order $2m$, $R=\{\rho^j:0\le j\le m-1\}$, $R\sigma=\{\rho^j\sigma:0\le j\le m-1\}$, and $F\subseteq R\sigma$ satisfy $|F|=m-k$.
If $S \;=\; (R\setminus\{e\}) \;\cup\; F$ is a generating set,
then $X=\mathrm{Cay}(D_{2m},S)$ satisfies $(i)-(iii)$ of Theorem \ref{Theorem 4.1}.}
\end{prop}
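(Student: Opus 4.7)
The plan is to verify (i)--(iii) using the group structure of $D_{2m}$ together with Menger's theorem. First I observe that $S$ is inverse-closed: every nontrivial rotation's inverse lies in $R\setminus\{e\}$, and the relation $\sigma\rho\sigma=\rho^{-1}$ forces $(\rho^j\sigma)^{-1}=\rho^j\sigma$, so any $F\subseteq R\sigma$ is self-inverse. Hence $X$ is a Cayley graph of regularity $|S|=(m-1)+(m-k)=2m-1-k$, and (iii) is immediate: $\Delta(X)=2m-1-k\geq (r+8)-1-5=r+2$, using $a=2m\geq r+8$ and $k\leq 5$.

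For (i), the decisive observation is that in $D_{2m}$ the product $uv^{-1}$ of two distinct elements is a reflection iff exactly one of $u, v$ is a reflection, because the rotations form an index-two subgroup. Two distinct vertices of $X$ are non-adjacent iff $uv^{-1}\in R\sigma\setminus F$, which is a set of reflections. Therefore any independent set of $X$ contains at most one rotation and at most one reflection, giving $\alpha(X)\leq 2\leq r-1$ since $r\geq 7$.

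The heart of the argument is (ii). Each vertex of $X$ has exactly $k$ non-neighbors in $V(X)\setminus\{v\}$ because $|D_{2m}\setminus(S\cup\{e\})|=k$. Given two distinct non-adjacent vertices $u,v$, the vertex $v$ is itself one of the $k$ non-neighbors of $u$, so at most $k-1$ of the remaining $2m-2$ vertices fail to neighbor $u$; symmetrically, at most $k-1$ fail to neighbor $v$. Hence $u$ and $v$ share at least $(2m-2)-2(k-1)=2m-2k$ common neighbors, each giving an internally vertex-disjoint $u$--$v$ path of length two. By Menger's theorem $\kappa(u,v)\geq 2m-2k$, and $r\geq k+1$ (from $r\geq 7$, $k\leq 5$) yields $2m-2k\geq 2m-2r+1=a-b$. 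Since $k\geq 1$ ensures $X$ is not complete, we conclude $\kappa(X)\geq a-b$.

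The only real obstacle is the careful counting in (ii): the sharp bound $2m-2k$ on common neighbors uses the fact that $v$ occupies one of the $k$ ``non-neighbor slots'' of $u$ (and vice versa), which is precisely what provides the extra slack needed to reach $2m-2r+1$ rather than just $2m-2k-2$. The algebraic facts in (i) are standard and (iii) is a one-line arithmetic check.
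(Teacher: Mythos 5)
Your proof is correct. Parts (i) and (iii) coincide with the paper's argument (the paper phrases (i) as: $R$ and $R\sigma$ each induce cliques in $X$, so an independent set meets each coset at most once). For part (ii) you take a genuinely different and shorter route. The paper proves $\kappa(X)\ge 2m-(2k+1)$ by an explicit three-case construction of internally disjoint paths (both endpoints rotations, both reflections, or mixed), routing $m-1$ paths through the clique containing the endpoints and at least $m-2k$ further paths through common neighbours in the other coset. You instead observe that the complement of $X$ is the $k$-regular Cayley graph $\mathrm{Cay}(D_{2m},R\sigma\setminus F)$, so any two non-adjacent vertices have at least $(2m-2)-2(k-1)=2m-2k$ common neighbours, each supplying a length-two path; the global form of Menger's theorem for non-complete graphs then gives $\kappa(X)\ge 2m-2k>2m-2r+1=a-b$. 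This eliminates the case analysis entirely and even yields a marginally stronger bound ($2m-2k$ versus the paper's $2m-(2k+1)$). The only point worth making explicit is that you invoke the version of Menger/Whitney in which it suffices to check non-adjacent pairs of a non-complete graph; this is standard, but it is stated in a slightly different form in the paper's preliminaries, where all pairs are required.
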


\begin{proof}
Let $d=|S|=(m-1)+(m-k)=2m-(k+1)$. Then $X$ is a $d$-regular graph.

\begin{enumerate}[(i)]
    \item We write $D_{2m}=R\cup R\sigma$ as the disjoint union of cosets $R$ and $R\sigma$.
    Now, each of $R$ and $R\sigma$ induces a clique in $X$.
Take two distinct rotations \(\rho^i,\rho^j\in R\) with \(i< j\). Then
$
(\rho^i)^{-1}\rho^j=\rho^{j-i}\in R\setminus\{e\}\subseteq S,
$
so \(\rho^i\) and \(\rho^j\) are adjacent in $X$.
Similarly, take two distinct reflections \(\rho^i\sigma,\rho^j\sigma\in R\sigma\) with \(i> j\). 
We note that $\sigma^{2}=\{e\}$, so $\sigma^{-1}=\sigma$ and $\sigma\rho^{k}\sigma=\rho^{-k}$ for all integers $k\geq 1$.
Then
\[
(\rho^i\sigma)^{-1}(\rho^j\sigma)
= (\sigma^{-1}\rho^{-i})(\rho^j\sigma) 
= \sigma\rho^{-i}\rho^j\sigma
= \sigma\rho^{\,j-i}\sigma
= \rho^{-(j-i)}=\rho^{\,i-j}\in S.
\]

So, \(\rho^i\sigma\) and \(\rho^j\sigma\) are adjacent in $X$.
Thus, any independent set in $X$ can contain at most one rotation and at most one reflection. Hence $\alpha(X)\leq 2< r-1$ since $r> 6$.
    
\item We show $\kappa(X)\geq 2m-(2k+1)$. By Menger's theorem, it suffices to show that for any two distinct vertices
$u,v\in V(X)$, there are $2m-(2k+1)$ internally vertex-disjoint $u$--$v$ paths.
By the arguments in (i), $R$ and $R\sigma$ induces a clique $K_m$ in $X$. Moreover, we can see that each rotation has exactly $m-k$ neighbours in $R\sigma$ and each reflection has exactly $m-k$ neighbours in $R$.
Pick $u,v\in V(X)$.

\medskip\noindent\textbf{Case A: $u,v\in R$ are distinct rotations.}  
Write $u=\rho^i$, $v=\rho^j$, $i\ne j$. Since $R$ induces a clique $K_{m}$, there is an edge $uv$ joining $u$ and $v$. Let $P_{0}=\{uv\}$. 
Let $R\setminus\{u,v\}=\{\rho^{r_1},\dots,\rho^{r_{m-2}}\}$. For each $\rho^{r_t}\in R\backslash\{u,v\}$ consider the path
\begin{center}
$P_t= \{u\rho^{r_t}, \rho^{r_t}v\}$.    
\end{center}

We construct $m-2k$ disjoint paths each using a distinct reflection as internal vertex.
We note that the set of reflections
adjacent to $u$ is a translate $\rho^i F$, and those adjacent to $v$ is $\rho^j F$ for some $i$ and $j$. Since $|F|=m-k$, we have $\vert N_{X}(u)\cap N_{X}(v)\cap R\sigma\vert =\vert\rho^i F\cap \rho^j F\vert\geq m-2k$.\footnote{$N_{G}(x)$ denotes the open-neighborhood of $x$ in $G$.}
Therefore, there are at least $m-2k$ reflections adjacent to both $u$ and $v$.
For each such reflection $s\in \rho^i F\cap\rho^j F$, consider the path
\[
Q_s= \{us, sv\}.
\]

Then $\{P_{0}\}\cup\{P_{t}:\rho^{r_{t}}\in R\backslash\{u,v\}\}\cup \{Q_{s}:s\in \rho^i F\cap\rho^j F, N_{X}(u)=\rho^{i}F, N_{X}(v)=\rho^{j}F\}$ is a collection of $(m-1)+(m-2k)=2m-(2k+1)$ internally vertex-disjoint paths.
 
\medskip\noindent\textbf{Case B: 
$u,v\in R\sigma$ are distinct reflections.} 
This is analogous to Case A. One only needs to swap the roles of rotations and reflections.

\medskip\noindent\textbf{Case C: $u$ is a rotation and $v$ is a reflection (or vice versa).} \\
Without loss of generality, assume $u=\rho^i$ and $v=\rho^j\sigma$ for some $i$ and $j$. 

\begin{itemize}
  \item[] \textsc{Subcase C(A):} Suppose $u$ and $v$ are adjacent. Consider the edge $uv$ as a path.   
  Since $v$ has $m-k$ neighbors in $R$, there are $m-(k+1)$ neighbors of $v$ in $R\backslash \{u\}$.
  For each such rotation $\rho^{r}\in R\setminus\{u\}$ (that is a neighbor of $v$) take the path $P_{\rho^{r}}=\{u\rho^{r},\rho^{r}v\}$.
  Similarly, there are $m-(k+1)$ different reflections distinct from $v$ that are neighbors of $u$. For each such reflection $s$, take the path $P_{s}=\{us, sv\}$. 
  Altogether, these give $1+2(m-(k+1))=2m-(2k+1)$ internally vertex disjoint paths from $u$ to $v$.
  \vspace{2mm}

  \item[] \textsc{Subcase C(B):} If $u$ is not a neighbor of $v$, then similar to \textsc{Subcase C(A)}, we can obtain at least $2m-(2k+1)$ internally vertex disjoint paths from $u$ to $v$.
\end{itemize}

\medskip
By Menger's theorem, $\kappa(X)\ge 2m-(2k+1)=a-(2k+1)> a-b$ since $b=2r-1>2k+1$.
    
    \item Since $X$ is $d$-regular and $a\geq r+8$, we have $\Delta(X)=2m-(k+1)=a-(k+1)\geq r+2$.
\end{enumerate}
\end{proof}

\subsection{circulant Cayley graph}

\begin{prop}\label{Proposition 4.5} 
{\em Let $t=\max\Big\{\Big\lceil\frac{a}{r-1}\Big\rceil-1,\ \Big\lceil\frac{a-b}{2}\Big\rceil,\ \Big\lceil\frac{r+2}{2}\Big\rceil\Big\}$ such that $2t\leq b$.
Let $X=C_a^{\,t}$ be the $t^{th}$-power of the $a$-cycle $C_{a}$. Then $X$ satisfies $(i)-(iii)$ of Theorem \ref{Theorem 4.1}.}
\end{prop}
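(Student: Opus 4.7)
The plan is to verify $(i)$--$(iii)$ in turn. For $(iii)$, observe that $X=C_a^{t}$ is $2t$-regular when $2t<a$, so $\Delta(X)=2t$; otherwise $X=K_a$ and $\Delta(X)=a-1\ge 2t$. In either case the bound $t\ge\lceil(r+2)/2\rceil$ immediately gives $\Delta(X)\ge 2t\ge r+2$. For $(i)$: if $X=K_a$ then $\alpha(X)=1<r-1$ since $r>6$; otherwise $t$ lies in $[2,\lfloor a/2\rfloor]$ (with $t\ge 4$ from $t\ge\lceil(r+2)/2\rceil$ and $r>6$), so Lemma \ref{Lemma 2.5} yields $\alpha(X)=\lfloor a/(t+1)\rfloor$. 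From $t+1\ge\lceil a/(r-1)\rceil\ge a/(r-1)$ we conclude $\alpha(X)\le r-1$.

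For $(ii)$ we must show $\kappa(X)\ge a-b$. Since $t\ge\lceil(a-b)/2\rceil$ gives $2t\ge a-b$, it suffices to prove $\kappa(C_a^{t})\ge 2t$ in the non-complete case $2t<a$ (the complete case is immediate, as $\kappa(K_a)=a-1\ge a-b$ because $b\ge 1$). I would apply Menger's theorem: by vertex-transitivity of the Cayley graph $\mathrm{Cay}(\mathbb{Z}_a,\{\pm 1,\ldots,\pm t\})$, it suffices to exhibit $2t$ internally vertex-disjoint $u$--$v$ paths for $u=0$ and $v=d$ with $1\le d\le\lfloor a/2\rfloor$. When $d>t$, for each $j\in\{1,\ldots,t\}$ I construct a forward path $P_j^{+}$ that begins with the edge $0\to j$ followed by repeated steps of size $+t$ along the arithmetic progression $j,\,j+t,\,j+2t,\ldots$, stays inside the arc $[1,d-1]$, and terminates with a single edge of length at most $t$ into $d$; symmetrically, I define a backward path $P_j^{-}$ that starts $0\to -j\pmod{a}$ and steps by $-t$ through the arc $[d+1,a-1]$.

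The $2t$ resulting paths are pairwise internally disjoint for two reasons. First, the intermediate vertices of $P_j^{+}$ (respectively $P_j^{-}$) all lie in the residue class $j\bmod t$ (respectively $-j\bmod t$), so distinct $j$'s within either family yield disjoint progressions. Second, the forward family is confined to $[1,d-1]$ while the backward family is confined to $[d+1,a-1]$, so no forward--backward collision occurs. The main obstacle is the Menger construction, particularly the edge case $d\le t$ in which $0$ and $d$ are already adjacent and the forward arc is short; there one exhibits the $2t$ paths as the direct edge $0\to d$, together with length-two paths through each element of $N(0)\cap N(d)\setminus\{0,d\}$, and a small number of longer detours that route the remaining neighbors of $0$ (those outside $N(d)$) through the backward arc using step-$t$ progressions. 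The assumption $2t<a$ is what keeps the step-$t$ progressions from wrapping past $d$ and colliding with the opposite family, and it is what guarantees that $[1,d-1]$ and $[d+1,a-1]$ are genuine disjoint arcs.
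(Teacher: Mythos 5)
Your proposal is correct and follows essentially the same route as the paper: split off the complete-graph case, use Lemma \ref{Lemma 2.5} for $\alpha(X)\le r-1$, use $2t$-regularity for $\Delta(X)\ge r+2$, and prove $\kappa(X)\ge 2t\ge a-b$ via Menger's theorem with $t$ clockwise and $t$ anticlockwise step-$t$ arithmetic-progression paths, justified by the same residue-class and arc-separation disjointness arguments (and the same ad hoc treatment of the adjacent case).
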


\begin{proof}
The following shows that $1\le t\le\lfloor \frac{a}{2}\rfloor$ since $a=n-2r+1\geq (3r+3)-2r+1=r+4$.
\begin{enumerate}
    \item We show
$\Big\lceil\frac a{r-1}\Big\rceil-1 \le \Big\lfloor\frac a2\Big\rfloor$ i.e., 
$\Big\lceil\frac a{r-1}\Big\rceil \le \Big\lfloor\frac a2\Big\rfloor +1$.
Since for any real number $x$ we have $\lceil x\rceil \le x+1$, it suffices to verify

\begin{center}
$\frac a{r-1} + 1 \le \Big\lfloor\frac a2\Big\rfloor +1$.   
\end{center}

Since $\lfloor \frac{a}{2}\rfloor \ge \frac{a}{2} - \frac{1}{2}$, it is enough to verify
$
\frac a{r-1} \le \frac{a}{2} - \frac{1}{2}$ i.e., 
$(r-3)a \ge r-1$.

But $r\ge7$ and $a\ge r+4$ give
$(r-3)a \ge (r-3)(r+4) = r^2 + r - 12 \ge r-1
$. 

\item  We show
$\Big\lceil\frac{a-b}{2}\Big\rceil \le \Big\lfloor\frac a2\Big\rfloor.$
Since $b\ge1$, we have
$\frac{a-b}{2}\leq \frac{a}{2} - \frac{1}{2}$. For any natural number $x$, we have
$\lceil \frac{x}{2}-\frac{1}{2}\rceil = \lfloor \frac{x}{2}\rfloor$. Hence,
\begin{center}
$\lceil\frac{a-b}{2}\rceil \le \lceil \frac a2 - \frac12\rceil
=\lfloor\frac a2\rfloor$.    
\end{center}

\item  We show
$
\Big\lceil\frac{r+2}{2}\Big\rceil \le \Big\lfloor\frac a2\Big\rfloor.
$
Since $a\ge r+4$, we have $\lfloor \frac{a}{2}\rfloor \ge \lfloor \frac{r+4}{2}\rfloor$.

\begin{itemize}
  \item If $r=2k$ is even, then $\lceil\frac{r+2}{2}\rceil = k+1$ and
  $\lfloor\frac{r+4}{2}\rfloor = k+2$, so $k+1 < k+2 \le \lfloor \frac{a}{2}\rfloor$.
  \item If $r=2k+1$ is odd, then $\lceil\frac{r+2}{2}\rceil =\lfloor\frac{r+4}{2}\rfloor= k+2\leq \lfloor \frac{a}{2}\rfloor$.
\end{itemize}
\end{enumerate}

If $t=\lfloor \frac{a}{2}\rfloor$, then $X=C_a^{\,t}\cong K_a$.
In view of Example \ref{Example 4.2}, it is enough to assume that $t<\lfloor a/2\rfloor$. 

\begin{enumerate}[(i)]
    \item 
By Lemma \ref{Lemma 2.5}, $\alpha(C_a^{\,t})=\lfloor \frac{a}{t+1}\rfloor$. Since
$t\ge\lceil \frac{a}{r-1}\rceil-1$, we get $\alpha(X)\le r-1$.
   \item Since $t<\lfloor \frac{a}{2}\rfloor$ and $C_a^{\,t}$ is the circulant Cayley graph $\mathrm{Cay}(\mathbb Z_a,\{\pm1,\dots,\pm t\})$, we can see that $C_a^{\,t}$ is a $2t$-regular graph. Thus, $\Delta(X)=2t\ge r+2$ since
$t\ge\lceil(r+2)/2\rceil$. 
\end{enumerate}
 
\begin{claim}\label{Claim 4.6}
    {\em $\kappa(X)\geq a-b$.}
\end{claim}

\begin{proof}
By Menger's theorem, it is enough to show that for an arbitrary pair of vertices $\{u,v\}$, there exists at least $a-b$ internally vertex-disjoint $u$--$v$ paths. 
Let the vertices of $C_a$ be labeled $v_0, v_1, \dots, v_{a-1}$ in clockwise direction. 
Without loss of generality, let $u = v_0$ and $v = v_d$.

\textbf{Case(A): $u$ and $v$ are non-adjacent.}
Then $t < d < a-t$.
We construct $t$ paths moving in the clockwise direction and $t$ paths moving in the anti-clockwise direction.

\textbf{Clockwise Paths ($P_k^+$):}
For each $k \in \{1, 2, \dots, t\}$, let 
\begin{center}
$P_k^+=\{v_{0}v_{k}, v_{k}v_{k+t},v_{k+t}v_{k+2t},...,  v_{k+\lfloor \frac{d-k}{t} \rfloor t} v_d\}$ 
\end{center}

be a path from $v_0$ to $v_d$ that starts with the edge $v_0 v_k$ and the subsequent edges are
$v_{k+(r-1)t}v_{k+rt}$ for $1\leq r\leq {\lfloor \frac{d-k}{t} \rfloor }$.
The edge $v_{k+\lfloor \frac{d-k}{t} \rfloor t} v_{d}$ is valid since $v_{{k+\lfloor \frac{d-k}{t}} \rfloor t}$ is at distance at most $t$ from $v_d$.

\begin{figure}[h]
\centering

\begin{tikzpicture}[scale=1.3]
    \def\a{12} 
    \def\t{2}  
    \def\d{6}  
    \foreach \i in {0,...,\numexpr\a-1} {
        \ifnum\i=0
            \node[circle,draw,inner sep=1pt,label=above:$v_0$] (v0) at ({90-360*\i/\a}:1) {};
        \else
            \node[circle,draw,inner sep=1pt] (v\i) at ({90-360*\i/\a}:1) {};
        \fi
    }
\foreach \i in {0,...,\numexpr\a-1} {
    \pgfmathtruncatemacro{\next}{mod(\i+1,\a)}

    \ifnum\i=6
        \draw[black] (v\i) -- (v\next);  
    \else\ifnum\i=7
        \draw[very thick,black] (v\i) -- (v\next); 
    \else
        \draw[gray!40] (v\i) -- (v\next); 
    \fi\fi
     \ifnum\i=7
        \node at ($(v\i)+(0,-0.2)$) {$v_{d}$};
    \fi
}
    \pgfmathtruncatemacro{\nsteps}{int(\d/\t)}
\foreach \j in {1,...,\nsteps} {
    \pgfmathtruncatemacro{\prev}{mod((\j-1)*\t,\a)}
    \pgfmathtruncatemacro{\idx}{mod(\j*\t,\a)}
    
    \ifnum\j=3
        \draw[thick,black,dotted] (v\prev) -- (v\idx);
    \else
        \draw[black] (v\prev) -- (v\idx);
    \fi
    
    \ifnum\j=1
        \node at ($(v\idx)+(0.25,0)$) {$v_{k}$};
    \fi
    \ifnum\j=2
        \node at ($(v\idx)+(0.37,0)$) {$v_{k+t}$};
    \fi
    
    \ifnum\j=3
        \node at ($(v\idx)+(0.45,-0.3)$) {$v_{k+\lfloor \frac{d-k}{t}\rfloor t}$};
    \fi
}
\pgfmathtruncatemacro{\current}{0} 
\foreach \i in {1,2} { 
    \pgfmathtruncatemacro{\next}{mod(\current - \t + \a,\a)} 

    \ifnum\i=2
        \draw[very thick,black,dotted] (v\current) -- (v\next);
    \else
        \draw[very thick,black] (v\current) -- (v\next);
    \fi

    \xdef\current{\next} 

    \ifnum\i=1
        \node at ($(v\next)+(-0.4,0)$) {$v_{a-k}$};
    \fi
    
    \ifnum\i=2
       \node at ($(v\next)+(-1,0)$) {$v_{a-k-\lfloor \frac{a-d-k}{t} \rfloor t}$};
    \fi
}

\end{tikzpicture}
\caption{\em A clockwise path $P_{k}^{+}$ and an anticlockwise path $P_{k}^{-}$.}
\label{Figure 3}
\end{figure}
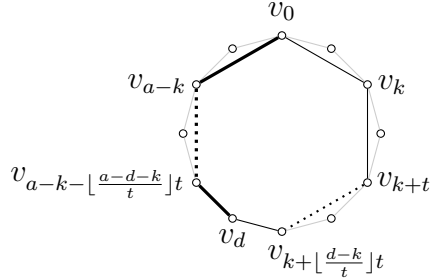

\textbf{Anti-Clockwise Paths ($P_k^-$):}
For each $k \in \{1, 2, \dots, t\}$, we similarly construct a path 
\begin{center}
$P_k^-=\{ v_0 v_{a-k}, v_{a-k}v_{a-k-t}, v_{a-k-t}v_{a-k-2t},..., v_{a-k-\lfloor \frac{a-d-k}{t} \rfloor t} v_d\}$     
\end{center}

by moving in the anti-clockwise direction (see Fig. \ref{Figure 3}).

Define $\mathcal{Q}=\{P_{k}^{+}:k\in\{1,...,t\}\}\cup \{P_{k}^{-}:k\in\{1,...,t\}\}$.

\begin{subclaim}\label{Subclaim 4.7}
    {\em The set $\mathcal{Q}$ is a collection of internally vertex disjoint $u-v$ paths.}
\end{subclaim}

\begin{proof}
Consider two distinct clockwise paths $P_k^+$ and $P_j^+$ with $1 \le k < j \le t$. The internal vertices of $P_k^+$ and $P_{j}^{+}$ have indices of the form $k + b_{1}t$ and $j + b_{2}t$ respectively, for some integers $b_{1}, b_{2} > 0$. If they share a vertex, then for some $b_{1}, b_{2}>0$ we have
\[ 
j+b_{2}t \equiv k+b_{1}t \pmod a \text{ i.e., } j-k \equiv (b_{1}-b_{2})t \pmod a. 
\]
Since $1 \le k < j \le t$, we have 
$1\leq j-k\leq t-1$. Also, $a > 2t$. Since the only multiple of $t$ in the range $[-(t-1), t-1]$ is $0$, we obtain $k-j=0$, which contradicts $k \ne j$. Thus, all clockwise paths are internally vertex-disjoint among each other. Similarly, all anti-clockwise paths are also internally vertex-disjoint among each other. 
Furthermore, $P_k^+$ and $P_j^-$ are internally disjoint for any $1 \le k, j \le t$ since the indices of the internal vertices of $P^{+}_{k}$ can be at most $d-1$ where as the indices of the internal vertices of $P^{-}_{j}$ can be at least $d+1$. 

\textbf{Case(B): $u$ and $v$ are adjacent.} Then $\mathcal{R}=\{P_{k}^{-}:k\in \{1,...,t\}\}\cup \{v_{0}v_{d}\}\cup \{\{v_{0}v_{k},v_{k}v_{d}\}:k\in \{1,...,t\}\backslash \{d\}\}$ is a collection of $2t$ internally vertex disjoint $u-v$ path.
\end{proof}

Thus, there are at least $2t$ internally vertex-disjoint paths between the arbitrary chosen pair $\{u,v\}$ of vertices. By Menger's theorem, we have $\kappa(X)\ge 2t\geq a-b$ since $t\ge\lceil(a-b)/2\rceil$. 
\end{proof}

\end{proof}
\textbf{Funding} The author was supported by the EK\"{O}P-24-4-II-ELTE-996 University Excellence scholarship program of the Ministry for Culture and Innovation from the source of the National Research, Development and Innovation fund. 

\end{document}